\DeclareMathOperator{\diam}{diam\,}
\DeclareMathOperator{\co}{co}
\renewcommand{\geq}{\geqslant}
\renewcommand{\leq}{\leqslant}
\newcommand{\spann}{\operatorname{span}}
\newcommand{\1}[1]{\operatorname{\textbf{1}}}
\newtheorem{theorem}{Theorem}[section]
\newtheorem{lemma}[theorem]{Lemma}
\newtheorem{corollary}[theorem]{Corollary}
\theoremstyle{definition}
\theoremstyle{remark}
\newtheorem{remark}[theorem]{Remark}
\numberwithin{equation}{section}
\def\fnote#1{\footnote}
\def\ignora#1{}
\def\n3#1{\left\vert  \! \left\vert \! \left\vert \, #1 \, \right\vert \!
  \right\vert \! \right\vert }
\newcommand{\iten}{\ensuremath{\widehat{\otimes}_\varepsilon}}
\newcommand{\pten}{\ensuremath{\widehat{\otimes}_\pi}}
\begin{document}

\title{ Weak operator Daugavet property and weakly open sets in tensor product spaces }

\author{ Abraham Rueda Zoca }\address{Universidad de Granada, Facultad de Ciencias. Departamento de An\'{a}lisis Matem\'{a}tico, 18071-Granada
(Spain)} \email{ abrahamrueda@ugr.es}
\urladdr{\url{https://arzenglish.wordpress.com}}

\subjclass[2020]{46B04, 46B20, 46B28}

\keywords {Daugavet property; weak operator Daugavet property; projective tensor product; injective tensor product; Diameter two properties}

\maketitle

\markboth{ABRAHAM RUEDA ZOCA}{WODP AND WEAKLY OPEN SETS IN TENSOR PRODUCT SPACES}

\begin{abstract}
We obtain new progresses about the diameter two property and the Daugavet property in tensor product spaces. Namely, the main results of the paper are:
\begin{itemize}
\item If $X^*$ has the WODP, then $X\iten Y$ has the DD2P for any Banach space $Y$.
\item If $X$ has the WODP, then $X\pten Y$ has the DD2P for any Banach space $Y$.
\item If $X^*$ and $Y^*$ have the WODP then $X\iten Y$ has the Daugavet property.
\end{itemize}
The above improve many results in the literature and establish progresses on some open questions.
\end{abstract}

\section{Introduction}\bigskip

The geometrical and topological properties of slices, weakly open sets and convex combinations of slices have been deeply studied because they have determined multiple properties of Banach spaces. In connection with the existence of such objects of small diameter we can highlight the characterisations of the \textit{Radon-Nikodym property (RNP), (convex) point of continuity property ((C)PCP)} or the \textit{strong regularity}. In the opposite extreme, the study of big slices, weakly open sets and convex combinations of slices have been analysed in connection with diameter two properties, octahedrality of the norm and the Daugavet property.

The study of the diameter two properties and the Daugavet properties in tensor product spaces have attracted the attention of many researchers in this century, leaving a vast literature on the topic \cite{aln,abr2011,blr2015ten, hlp17,kkw03,lange2020, llr,llr2,rtv21} together with many open questions (see e.g \cite{aln,llr,werner01}). In this line, probably the most long-standing open question is whether $X$ and $Y$ have the Daugavet property implies that $X\pten Y$ and/or $X\iten Y$ has the Daugavet proeprty \cite[Section 6, Question (3)]{werner01}. Moreover, in \cite[Section 5, (b)]{aln} it is asked how are diameter two properties in general preserved by tensor products.

Concerning the projective tensor product, the following results concerning the diameter two properties are known:
\begin{itemize}
\item If $X$ or $Y$ has the slice-D2P, then $X\pten Y$ has the slice-D2P \cite[Theorem 2.7]{aln}.
\item If $X$ and $Y$ have the SD2P, then $X\pten Y$ has the SD2P \cite[Corollary 3.6]{blr2015ten}. If one requires the SD2P just on $X$, then $X\pten Y$ may fail the SD2P \cite[Corollary 3.9]{llr2}.
\item If $K$ is an infinite compact Hausdorff space then $C(K)\pten X$ has the D2P for every non trivial Banach space $X$.
\end{itemize}
In view of the above results, it is clear that solid results exist for the slice-D2P and the SD2P in a projective tensor product; however, the study of the D2P seems to be missing. The main difficulty comes from the big advantage that slices provide in projective tensor product when compared with working with weakly open sets: since $B_{X\pten Y}:=\overline{\co}(B_X\otimes B_Y)$, an easy convexity argument implies that every slice of $B_{X\pten Y}$ must contain an elementary tensor of the form $x\otimes y$, for some $x\in B_X$ and $y\in B_Y$. This behaviour, no longer true for general non-empty weakly open subsets, explains the absense of results about the D2P in projective tensor products. Indeed, it is an open question to determine when $X\pten Y$ has the D2P (posed in an equivalent formulation in \cite[Question 4.2]{llr}), which is in turn a particular case of the above mentioned question \cite[Section 5, Question (b)]{aln}.

In the injective tensor product the situation is even worse because, as far as we know, there is no stability result of the diameter two properties by injective tensor product except the recent result that $X\iten Y$ may fail the SD2P even if $X$ enjoys the Daugavet property \cite[Theorem 3.1]{rueda24}. It is also knwon that $X\iten Y$ has the SD2P if $X$ is an infinite-dimensional $L_1$-predual \cite[Corollary 2.9]{belo06}, a result  based on the canonical identification $C(K)\iten X=C(K,X)$ \cite[Section 3.2]{ryan}. This time we can imagine the difficulty behind this problem when we see the injective tensor product as a space of bounded operators $X\iten Y\subseteq L(X^*,Y)$. With this point of view it seems clear that in order to study any diameter two property in the injective tensor product we need to handle and construct bounded linear operators, which is not an easy task. This observation, however, gives us the hint for the search of new results.

In \cite[Definition 5.2]{mr22}, the \textit{weak operator Daugavet property (WODP)} is introduced, as a weakening of the \textit{operator Daugavet property} introduced in \cite[Definition 4.1]{rtv21}, in order to obtain examples of projective tensor products with the Daugavet property. The WODP, which implies the Daugavet property \cite[Remark 5.3]{mr22}, is stable by taking projective tensor products in the sense that if $X$ and $Y$ have the WODP then $X\pten Y$ has the WODP \cite[Theorem 5.4]{mr22}.

In this paper we aim to get new results about the diameter two property in both injective and projective tensor products with the WODP. In Theorem~\ref{theo:injectivegeneral} we prove that $X\iten Y$ has the DD2P (in particular it has the D2P) if $X^*$ has the WODP and $Y$ is any Banach space $Y$. This can be applied, for instance, to get that $L_1(\mu)\iten Y$ has  the DD2P for any Banach space $Y$ if $\mu$ is an atomless measure (Corollary~\ref{coro:injeL1space}). We also prove in Theorem~\ref{theo:daugainjective} that $X\iten Y$ has the Daugavet property if both $X^*$ and $Y^*$ have the WODP. This result improves \cite[Theorem 1.1]{rtv21}, where it is proved the particular case when $X$ and $Y$ are $L_1$-spaces with the Daugavet property.

In Section~\ref{section:projective} we focus on studying the D2P and the DD2P in projective tensor product. First of all, we prove that if $X$ is an infinite-dimensional $L_1$-predual then $X\pten Y$ has the D2P for any Banach space $Y$, which improves \cite[Theorem 4.1]{abr2011}. Moreover, the main theorem in the section is Theorem~\ref{theo:wodpprojective}, where we prove that $X\pten Y$ has the DD2P whenever $X$ has the WODP and $Y$ is any Banach space. In addition to represent a progress on the open questions \cite[Question 4.2]{llr} and \cite[Section 5, (b)]{aln}, the above result improves \cite[Proposition 5.2]{lapi21}, where the authors obtained a weaker thesis under the same assumptions (see Remark~\ref{remark:projmejoraresul} for details).

\section{Notation and preliminary results}\bigskip

Throught the paper we will consider only real and non-trivial Banach spaces. We stand for $B_X$ and $S_X$ the closed unit ball and the unit sphere of the space $X$ respectively. We will denote by $X^*$ the topological dual of $X$. Given $A\subseteq X$, $\co(A)$ and $\spann(A)$ stands for the convex hull and the linear hull of the set $A$ in $X$.

Given two Banach spaces $X$ and $Y$ we will denote by $L(X,Y)$ the space of all the bounded operators from $X$ to $Y$. We will denote by $\mathcal F(X,Y)\subseteq L(X,Y)$ the space of finite-rank operators. Moreover, we will denote by $\mathcal F_{w^*-w}(X^*,Y)$ the space of those finite rank operator $T:X^*\longrightarrow Y$ which are weak$^*$-to-weak continuous.

\subsection{Tensor product spaces}

The projective tensor product of $X$ and $Y$, denoted by $X \pten Y$, is the completion of the algebraic tensor product $X \otimes Y$ endowed with the norm
$$
\|z\|_{\pi} := \inf \left\{ \sum_{n=1}^k \|x_n\| \|y_n\|: z = \sum_{n=1}^k x_n \otimes y_n \right\},$$
where the infimum is taken over all such representations of $z$. The reason for taking completion is that $X\otimes Y$ endowed with the projective norm is complete if, and only if, either $X$ or $Y$ is finite dimensional (see \cite[P.43, Exercises 2.4 and 2.5]{ryan}).

It is well known that $\|x \otimes y\|_{\pi} = \|x\| \|y\|$ for every $x \in X$, $y \in Y$, and that the closed unit ball of $X \pten Y$ is the closed convex hull of the set $B_X \otimes B_Y = \{ x \otimes y: x \in B_X, y \in B_Y \}$. Throughout the paper, we will use of both facts without any explicit reference.

Observe that the action of an operator $G\colon X \longrightarrow Y^*$ as a linear functional on $X \pten Y$ is given by
$$
G \left( \sum_{n=1}^{k} x_n \otimes y_n \right) = \sum_{n=1}^{k} G(x_n)(y_n),$$
for every $\sum_{n=1}^{k} x_n \otimes y_n \in X \otimes Y$. This action establishes a linear isometry from $ L(X,Y^*)$ onto $(X\pten Y)^*$ (see e.g. \cite[Theorem 2.9]{ryan}). All along this paper we will use the isometric identification $(X\pten Y)^*= L(X,Y^*)$ without any explicit mention.

Recall that a Banach space $X$ has the \textit{metric approximation property} (MAP)
if there exists a net $(S_\alpha)_\alpha$ in $\mathcal F(X,X)$ with $\Vert S_\alpha\Vert\leq 1$ for every $\alpha$ and such that
$S_\alpha (x) \to x$ for all $x \in X$. 

Recall that given two Banach spaces $X$ and $Y$, the
\textit{injective tensor product} of $X$ and $Y$, denoted by
$X \iten Y$, is the completion of $X\otimes Y$ under the norm given by
\begin{equation*}
   \Vert u\Vert_{\varepsilon}:=\sup
   \left\{
      \sum_{i=1}^n \vert x^*(x_i)y^*(y_i)\vert
      : x^*\in S_{X^*}, y^*\in S_{Y^*}
   \right\},
\end{equation*}
where $u=\sum_{i=1}^n x_i\otimes y_i$ (see \cite[Chapter 3]{ryan} for background).
Every $u \in X \iten Y$ can be viewed as an operator $T_u : X^* \rightarrow Y$ which is weak$^*$-to-weakly continuous. Under this point of view, the norm on the injective tensor product is nothing but the operator norm and, consequently, $X\iten Y$ can be seen as the norm-closure of $\mathcal F_{w^*-w}(X^*,Y)=\mathcal F_{w^*-w}(Y^*,X)$ in $L(X^*,Y)=L(Y^*,X)$. We will make use of this identification without further reference throughout the paper.

In order to define elements in the injective tensor product from finite-rank operators, the following lemma from \cite{ojapol07} will be particularly useful. This result asserts, roughly speaking, that every finite-rank operator between dual Banach spaces is an adjoint operator in a local way. We include the formal statement for easy reference since we will use this result several times in the text. 

\begin{theorem}\label{theo:preliojapol}\cite[Theorem 2.5]{ojapol07}
Let $X$ and $Y$ be Banach spaces, let $F\subseteq X^*$ be a finite-dimensional subspace and $\varepsilon>0$. If $T:X^*\longrightarrow Y^*$ is a finite-rank operator then there exists a finite-rank operator $S:Y\longrightarrow X$ satisfying the following assertions:
\begin{enumerate}
\item $\vert \Vert T\Vert-\Vert S\Vert\vert<\varepsilon$,
\item $S^*(X^*)=T(X^*)$,
\item $S^*(x^*)=T(x^*)$ holds for every $x^*\in F$ and,
\item $S^{**}(y^{**})=T^*(y^{**})$ holds for every $y^{**}\in Y^{**}$ for which $T^*(Y^{**})\in X$.
\end{enumerate}
\end{theorem}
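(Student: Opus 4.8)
The statement is the \emph{Principle of Local Reflexivity} (PLR) in disguise, so my plan is to reduce everything to a single application of PLR. First I would fix a representation $Tx^* = \sum_{i=1}^n \langle x^*, x_i^{**}\rangle\, y_i^*$ of the finite-rank operator, with $x_i^{**}\in X^{**}$ and $y_i^*\in Y^*$, and I would insist that this representation be minimal (i.e. $n = \operatorname{rank}(T)$), so that both families $\{x_i^{**}\}_{i=1}^n$ and $\{y_i^*\}_{i=1}^n$ are linearly independent; this minimality is what will eventually deliver the exact range equality in (2). I then set $E := \spann\{x_1^{**},\dots,x_n^{**}\}\subseteq X^{**}$, a finite-dimensional subspace, and record the key structural fact that $T^*(Y^{**})\subseteq E$ (indeed $T^*y^{**} = \sum_{i=1}^n \langle y^{**}, y_i^*\rangle x_i^{**}$).

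Next I would apply PLR to the finite-dimensional subspaces $E\subseteq X^{**}$ and $F\subseteq X^*$ with a small parameter $\delta>0$ to be fixed at the end. This produces an injective linear map $R\colon E\to X$ which is a $(1+\delta)$-isomorphism onto its image (so $\norm{R},\norm{R^{-1}}\leq 1+\delta$), which is the identity on $E\cap X$, and which satisfies $\langle x^*, R(x^{**})\rangle = \langle x^{**}, x^*\rangle$ for all $x^{**}\in E$ and $x^*\in F$. Putting $x_i := R(x_i^{**})\in X$, I would define $S\colon Y\to X$ by $Sy := \sum_{i=1}^n \langle y, y_i^*\rangle x_i$, so that $S^*x^* = \sum_{i=1}^n \langle x^*, x_i\rangle y_i^*$ and, crucially, $S^{**}y^{**} = \sum_{i=1}^n \langle y^{**}, y_i^*\rangle x_i = R(T^*y^{**})$.

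With this $S$ the four conclusions should fall out of one and the same $R$. Property (3) is immediate from the $F$-duality clause, since for $x^*\in F$ one has $\langle x^*, x_i\rangle = \langle x_i^{**}, x^*\rangle$, whence $S^*x^* = Tx^*$. Property (4) follows from the fact that $R$ fixes $E\cap X$: if $T^*y^{**}\in X$, then $T^*y^{**}\in E\cap X$, so $S^{**}y^{**} = R(T^*y^{**}) = T^*y^{**}$. Property (2) uses the injectivity of $R$, which keeps $\{x_i\}$ linearly independent; since the evaluation maps $x^*\mapsto(\langle x^*,x_i\rangle)_i$ and $x^*\mapsto(\langle x^*,x_i^{**}\rangle)_i$ are then both surjective onto $\R^n$, both $S^*(X^*)$ and $T(X^*)$ fill all of $\spann\{y_i^*\}$ and hence coincide. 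Finally, for the norm estimate (1) the point is that both norms are computed over vectors living in $E$: one has $\norm{T} = \norm{T^*} = \sup_{\norm{y^{**}}\leq 1}\norm{T^*y^{**}}$ and $\norm{S} = \norm{S^{**}} = \sup_{\norm{y^{**}}\leq 1}\norm{R(T^*y^{**})}$, and since $R$ distorts norms on $E$ by a factor at most $1+\delta$, choosing $\delta$ small relative to $\norm{T}$ converts the multiplicative bound into the additive estimate $\bigl|\norm{T}-\norm{S}\bigr|<\varepsilon$.

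The part that needs the most care — and the reason the precise formulation of PLR matters — is forcing all four conclusions to hold simultaneously from a single map $R$: the exact range identity (2) dictates the minimal representation and the injectivity of $R$, the biadjoint identity (4) dictates the identity-on-$E\cap X$ clause, while the norm control (1) hinges on the observation that every relevant vector $T^*y^{**}$ already lies in the subspace $E$ on which $R$ is a near-isometry. Reconciling these three requirements at once, rather than verifying any one estimate in isolation, is the main obstacle.
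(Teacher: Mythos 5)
Your proof is correct, but be aware that the paper itself contains no proof to compare against: Theorem~\ref{theo:preliojapol} is quoted verbatim from \cite{ojapol07} purely for later reference. Measured against that original source, your route runs in the opposite logical direction. Oja and P\~oldvere prove exact-equality statements of this operator-theoretic kind directly (that is the point of their ``revisited'' principle of local reflexivity), with PLR-type results coming out as consequences; you instead take the classical strong PLR of Johnson--Rosenthal--Zippin as a black box --- $R\colon E\to X$ injective, $\Vert R\Vert\leq 1+\delta$, $\Vert R^{-1}\Vert\leq 1+\delta$, $R=\operatorname{id}$ on $E\cap X$, and $\langle x^*,Rx^{**}\rangle=\langle x^{**},x^*\rangle$ for $x^{**}\in E$, $x^*\in F$ --- and deduce the theorem from a single application of it. Granting that form of PLR, every step of yours is sound: minimality of the representation makes $\{x_i^{**}\}$ and $\{y_i^*\}$ linearly independent; the identity $T^*y^{**}=\sum_{i=1}^n\langle y^{**},y_i^*\rangle x_i^{**}$ gives $T^*(Y^{**})\subseteq E$, which is exactly what (1) and (4) need; injectivity of $R$ keeps $\{x_i\}$ independent, so both ranges in (2) equal $\spann\{y_1^*,\ldots,y_n^*\}$; (3) is the $F$-duality clause; and $\Vert S\Vert=\Vert S^{**}\Vert=\sup_{\Vert y^{**}\Vert\leq 1}\Vert R(T^*y^{**})\Vert$ is squeezed between $(1+\delta)^{-1}\Vert T\Vert$ and $(1+\delta)\Vert T\Vert$, so choosing $\delta\Vert T\Vert<\varepsilon$ gives (1), the case $T=0$ being trivial. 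As for what each approach buys: yours is a few lines long once PLR is available, and it shows that the quoted theorem is no stronger than strong PLR plus bookkeeping; the original proof is self-contained and delivers the exact equalities (2)--(4), which textbook statements of PLR do not mention, as its primary objects. Two points of hygiene: read condition (4) as ``$T^*(y^{**})\in X$'' (the ``$T^*(Y^{**})\in X$'' in the statement is a typo, and your reading is the intended one); and when citing PLR, use a formulation that gives the two-sided bounds $\Vert R\Vert\leq 1+\delta$ and $\Vert R^{-1}\Vert\leq 1+\delta$ simultaneously with the two exact clauses, since the frequently quoted product bound $\Vert R\Vert\,\Vert R^{-1}\Vert<1+\delta$ cannot simply be rescaled without destroying the equalities that (3) and (4) rely on --- such two-sided formulations are standard, so this is a citation matter, not a gap.
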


Given two bounded operators $T\colon X\longrightarrow Z$ and $S\colon Y\longrightarrow W$, we can define an operator $T\otimes S\colon X\iten Y\longrightarrow Z\iten W$ by the action $(T\otimes S)(x\otimes y):=T(x)\otimes S(y)$ for $x\in X$ and $y\in Y$. It follows that $\Vert T\otimes S\Vert=\Vert T\Vert\Vert S\Vert$. It is known that if $T,S$ are linear isometries then $T\otimes S$ is also a linear isometry (c.f. e.g. \cite[Section 3.2]{ryan}). This fact is commonly known as ``the injective tensor product respects subspaces''.

It is also known that, given two Banach spaces $X$ and $Y$, the equality $(X\iten Y)^*=X^*\pten Y^*$ holds if either $X^*$ or $Y^*$ has the Radon-Nikodym Property (RNP) and either $X^*$ or $Y^*$ has the approximation property (AP) \cite[Theorem 5.33]{ryan}.

\subsection{Weak operator Daugavet property and related}

Here we will introduce the necessary notation from geometry of Banach spaces.

By a slice of the unit ball we denote a set of the form
$$S(B_X,x^*,\alpha):=\{x\in B_X: x^*(x)>\sup  (x^*(B_X))-\alpha\},$$
where $x^*\in X^*$ and $\alpha>0$. If $X$ is a dual space ($X=Y^*$) and $x^*\in Y\subseteq Y^{**}=X^*$, the abose set is said to be a \textit{$w^*$-slice}.

 We also consider a \textit{convex combination of slices of $B_X$} to be a set of the following form
$$\sum_{i=1}^n\lambda_i S_i,$$
where $S_1,\ldots, S_n$ are slices of $B_X$ and  $\lambda_1,\ldots, \lambda_n\in [0,1]$ satisfy that $\sum_{i=1}^n\lambda_i=1$.

Recall that a Banach space $X$ is said to have the \textit{Daugavet property} if given any rank-one bounded operator $T:X\longrightarrow X$ the equality
$$\Vert T+I\Vert=1+\Vert T\Vert$$
holds, where $I$ stands for the identity operator. This is the original formulation by I. Daugavet from \cite{dau}. In \cite{kssw} it was proved the following useful characterisation in geometric terms: a Banach space $X$ has the Daugavet property if, and only if, given any $x\in S_X$, any $\varepsilon>0$ and any slice $S$ of $B_X$ we can find $y\in S$ such that $\Vert x+y\Vert>2-\varepsilon$ \cite[Lemma 2.1]{kssw}. Observe that slices can be replaced with non-empty relatively weakly open subsets (indeed with convex combination of slices as shown in (the proof of)   \cite[Lemma 3]{shv}). In particular, if $X$ has the Daugavet property then every convex combination of slices (in particular every non-empty relatively weakly open subset of $B_X$) has diameter exactly two. This motivates us to introduce the diameter two properties we will need in this text.

Given a Banach space $X$, we say that $X$ has the:
\begin{itemize}
\item diameter two property (D2P) if every non-empty relatively weakly open subset of $B_X$ has diameter exactly two.
\item diametral diameter two property (DD2P) if, given any non-empty weakly open subset $W$ of $B_X$, any $x\in W\cap S_X$ and any $\varepsilon>0$ there exists $y\in W$ with $\Vert y-x\Vert>2-\varepsilon$.
\item strong diameter two property (SD2P) if every convex combination of slices of $B_X$ has diameter two.
\end{itemize}

The introduction of the D2P and the SD2P (together with another which we will not need) goes back to \cite{aln}, though the study of big weakly open sets and convex combinations of slices goes back earlier in the literature. On the other hand, the DD2P was introduced in \cite[Definition 2.1]{blr18}. The above three properties are known to be different each other (see e.g \cite{blr18}).

Going back to the Daugavet property, a longstanding open question from \cite{werner01} is whether $X\pten Y$ and/or $X\iten Y$ have/has the Daugavet property if both $X$ and $Y$ have the Daugavet property. In order to give a positive answer for the projective tensor product, in \cite{rtv21} and later in \cite{mr22} new geometric properties implying the Daugavet property were introduced in order to guarantee that $X\pten Y$ has the Daugavet property. Let us fix our attention in the following one from \cite[Definition 5.2]{mr22}: A Banach space $X$ is said to have the \textit{weak operator Daugavet property (WODP)} if, given $x_1,\ldots x_n\in S_X$, $\varepsilon>0$, a slice $S$ of $B_X$ and $x'\in B_X$, we can find $x\in S$ and $T\colon X\longrightarrow X$ with $\Vert T\Vert\leq 1+\varepsilon$, $\Vert T(x_i)-x_i\Vert<\varepsilon$ for every $i\in \{1,\ldots,n\}$ and $\Vert T(x)-x'\Vert<\varepsilon$. Examples of Banach spaces with the WODP are $L_1(\mu,X)$ if $\mu$ is an atomless $\sigma$-finite measure, $L_1$-preduals with the Daugavet property or the projective tensor product and the symmetric projective tensor products of Banach spaces with the WODP (see \cite[Section 5]{mr22} for details).

Observe that a perturbation argument allows, in the definition of WODP, to assume $\Vert T\Vert<1$. This evident observation will allow us to save a lot of homogeneity arguments in the future. Let us end with the following lemma, which will be employed in the proof of Theorem~\ref{theo:wodpprojective}. The proof will follow the lines of \cite[Lemmata 5.5 and 5.6]{mr22} because, in the end, is a mix of the ideas behind both results.

\begin{lemma}\label{lemma:proj}
Let $X$ be a Banach space with the WODP. Then, for every $k\in\mathbb N$, it follows that for every $x_1,\ldots, x_n\in S_X$, every non-empty relatively weakly open subsets $W_1,\ldots, W_k$ of $B_X$, every $y_1,\ldots, y_k\in B_X$ and every $\varepsilon>0$ there exists $z_i\in W_i, 1\leq i\leq k$, and a bounded linear operator $T:X\longrightarrow X$ satisfying that
\begin{enumerate}
\item $\Vert T\Vert< 1$,
\item $\Vert T(x_i)-x_i\Vert<\varepsilon$ holds for $1\leq i\leq n$ and,
\item $\Vert T(z_j)-y_j\Vert<\varepsilon$ holds for every $1\leq j\leq k$.
\end{enumerate}
\end{lemma}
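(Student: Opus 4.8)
The plan is to prove first a purely \emph{slice} version of the statement and then to deduce the general one, where the $W_j$'s are arbitrary non-empty relatively weakly open sets, by a standard convexity reduction. The slice version I would isolate reads: given $x_1,\dots,x_n\in S_X$, finitely many slices $S_1,\dots,S_m$ of $B_X$, targets $w_1,\dots,w_m\in B_X$ and $\varepsilon>0$, there are $s_j\in S_j$ and an operator $T$ with $\Vert T\Vert<1$, $\Vert T(x_i)-x_i\Vert<\varepsilon$ and $\Vert T(s_j)-w_j\Vert<\varepsilon$ for all $i,j$. Here I will freely use that the WODP permits approximately fixing finitely many points of $B_X$ (not just of $S_X$): for $v\in B_X\setminus\{0\}$ it suffices to fix $v/\Vert v\Vert\in S_X$ and invoke homogeneity, while $v=0$ is fixed automatically by linearity.

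The heart of the argument is to build $T$ as a composition $T=R_1\circ R_2\circ\cdots\circ R_m$, produced by applying the WODP $m$ times, where the order of composition is chosen so that all constraints survive simultaneously. Concretely, I would construct the factors in the order $l=1,2,\dots,m$ by applying the WODP to the slice $S_l$ with target $w_l$, requiring in addition that $R_l$ approximately fix the points $x_1,\dots,x_n$, the already chosen points $s_1,\dots,s_{l-1}$, and the not-yet-processed targets $w_{l+1},\dots,w_m$; since $\Vert R_l\Vert$ may be taken $<1$ (as noted before the lemma), this is legitimate, and at each stage all the points one must fix have already been determined. With this bookkeeping, when one evaluates $T(s_j)$ the inner factors $R_m,\dots,R_{j+1}$ each fix $s_j$ (so they leave it essentially unchanged), the factor $R_j$ sends it close to $w_j$, and the outer factors $R_{j-1},\dots,R_1$ each fix $w_j$ (so they leave $w_j$ essentially unchanged); likewise every factor fixes the $x_i$. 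Because each $\Vert R_l\Vert<1$, the composition is a contraction and the errors only accumulate additively, so prescribing the error at step $l$ small enough yields $\Vert T\Vert<1$, $\Vert T(x_i)-x_i\Vert<\varepsilon$ and $\Vert T(s_j)-w_j\Vert<\varepsilon$.

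I expect this choice of ordering to be the main obstacle: it is exactly what allows a \emph{single} composed operator to meet the ``fix these points / hit those targets'' constraints at once. The naive attempt to combine the $R_l$ fails because composition turns a slice $S_l$ into $R(S_l)$, which is no longer a slice; the ordering above circumvents this by never applying the WODP to a transported slice, only to the original $S_l$, while arranging the remaining factors to act as near-identities on the relevant points.

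Finally I would pass from slices to weakly open sets. Using the standard fact that every non-empty relatively weakly open subset of $B_X$ contains a convex combination of slices, write $\sum_{r=1}^{p_j}\mu_{j,r}S_{j,r}\subseteq W_j$ for each $j$. Applying the slice version to the whole family $\{S_{j,r}\}$, with the target attached to every $S_{j,r}$ taken to be $y_j$ and with the fixed points $x_1,\dots,x_n$, produces points $s_{j,r}\in S_{j,r}$ and a single operator $T$ with $\Vert T\Vert<1$, $\Vert T(x_i)-x_i\Vert<\varepsilon$ and $\Vert T(s_{j,r})-y_j\Vert<\varepsilon$. Setting $z_j:=\sum_{r=1}^{p_j}\mu_{j,r}s_{j,r}\in W_j$ and using $\sum_r\mu_{j,r}=1$ gives $\Vert T(z_j)-y_j\Vert\le\sum_r\mu_{j,r}\Vert T(s_{j,r})-y_j\Vert<\varepsilon$, which is precisely conditions (1)--(3).
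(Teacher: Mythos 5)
Your proof is correct, and its engine --- composing WODP operators while carefully fixing both the already-chosen points and the not-yet-processed targets --- is exactly the mechanism the paper relies on; but the organization is genuinely different. The paper argues by induction on $k$ and outsources the analytic work to two black boxes: the case $k=1$ is \cite[Lemma 5.6]{mr22} plus a normalisation, and in the inductive step \cite[Lemma 5.5]{mr22} produces an operator $G$ handling the new set $W_{k+1}$ while almost fixing $x_1,\ldots,x_n$ and the previously chosen $z_1,\ldots,z_k$, while the inductive-hypothesis operator $T$ is arranged to also almost fix the future target $y_{k+1}$; the composition $\Phi=T\circ G$ (with $G$ innermost) then closes the induction --- the same ``fix chosen points / fix future targets'' bookkeeping as yours, merely packaged as an induction rather than as an explicit $m$-fold product. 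What those cited lemmas conceal, and what you rebuild from scratch, is precisely the passage from slices (where the WODP is stated) to weakly open sets: you do it via Bourgain's lemma, that every non-empty relatively weakly open subset of $B_X$ contains a convex combination of slices, combined with the trick of attaching the \emph{same} target $y_j$ to every slice $S_{j,r}$ extracted from $W_j$, so that $z_j=\sum_r\mu_{j,r}s_{j,r}$ both lies in $W_j$ and satisfies $\Vert T(z_j)-y_j\Vert\leq\sum_r\mu_{j,r}\Vert T(s_{j,r})-y_j\Vert<\varepsilon$. Your slice-level bookkeeping (each $R_l$ fixes $x_i$, the chosen $s_1,\ldots,s_{l-1}$ and the pending $w_{l+1},\ldots,w_m$), the homogeneity reduction for fixing points of $B_X$ rather than $S_X$, and the additive error accumulation under norm-one compositions are all sound. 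In short, your route buys self-containedness --- only the WODP definition, the $\Vert T\Vert<1$ perturbation remark, and Bourgain's lemma are needed --- at the cost of invoking Bourgain's (nontrivial) result explicitly, whereas the paper buys brevity by citing \cite[Lemmata 5.5 and 5.6]{mr22}, inside whose proofs essentially the same slice-to-open-set reduction is carried out.
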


\begin{proof}
The proof will follow the lines of \cite[Lemma 5.5]{mr22} and will be proved by induction on $k$.

The case $k=1$ follows from \cite[Lemma 5.6]{mr22} together with a normalisation argument. 

Assume that the result holds for $k\in\mathbb N$ and let us prove that it also holds for $k+1$.

In order to do so, take $W_1,\ldots, W_{k+1}\subseteq B_X$ non-empty weakly open sets, $x_1,\ldots, x_n\in S_X$, $y_1,\ldots, y_{k+1}\in B_X$ and $\varepsilon>0$. 

By the inductive step we can find $z_i\in W_i$ for $1\leq i\leq k$ and a bounded operator $T:X\longrightarrow X$ such that $\Vert T\Vert< 1$ and that
\begin{itemize}
\item $\Vert T(x_j)-x_j\Vert<\varepsilon$ holds for $1\leq j\leq n$ and $\Vert T(y_{k+1})-y_{k+1}\Vert<\varepsilon$.
\item $\Vert T(z_i)-y_i\Vert<\varepsilon$ holds for $1\leq i\leq k$.
\end{itemize} 

Now by the aforementioned \cite[Lemma 5.5]{mr22} we can define an operator $G:X\longrightarrow X$ with $\Vert G\Vert< 1$ and satisfying that:
\begin{itemize}
\item $\Vert G(x_j)-x_j\Vert<\varepsilon$ holds for $1\leq j\leq n$ and $\Vert G(z_i)-z_i\Vert<\varepsilon$ holds for $1\leq i\leq k$ and,
\item $\Vert G(z_{k+1})-y_{k+1}\Vert<\varepsilon$.
\end{itemize}
Doing $\Phi:=T\circ G:X\longrightarrow X$ we get the desired mapping. It is clear that $\Vert \Phi\Vert\leq \Vert T\Vert \Vert G\Vert<1$. On the other hand, using the same estimates as in the proof of \cite[Lemma 5.5]{mr22} it can be proved that $\Vert \Phi(x_j)-x_j\Vert<(2+\varepsilon)\varepsilon$ for every $1\leq j\leq n$ and $\Vert \Phi(z_i)-y_i'\Vert\leq (2+\varepsilon)\varepsilon$ holds for $1\leq i\leq k+1$. The arbitrariness of $\varepsilon>0$ concludes the proof of the lemma.
\end{proof}

\section{Injective tensor product}\label{section:injective}\bigskip

In this section we aim to obtain a result about the DD2P for the injective tensor product. We will do it in two steps. To begin with, we will prove the result assuming that we are dealing with finite-dimensional Banach spaces in order to exploit \cite[Theorem 5.33]{ryan}, and next we will make use of the fact that the injective tensor product respects subspaces in order to get the result in complete generality.

\begin{theorem}\label{theo:finitediminjective}
Let $X$ be a Banach space such that $X^*$ has the WODP and let $Y$ be any finite dimensional Banach space. Then $X\iten Y$ has the DD2P.
\end{theorem}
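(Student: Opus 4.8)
The plan is to exploit the two features that finite dimensionality of $Y$ provides. First, $X\iten Y$ is isometrically the whole space $L(Y^*,X)$ of (automatically finite-rank, weak$^*$-to-weak continuous) operators, the operator norm being the injective norm. Second, $(X\iten Y)^*=X^*\pten Y^*$ by \cite[Theorem 5.33]{ryan}, since $Y^*$ is finite dimensional and hence trivially has the RNP and the AP. Fixing a basis $f_1,\dots,f_m$ of $Y^*$, every functional in $(X\iten Y)^*$ is then an honest finite tensor $\sum_{s=1}^m x_s^*\otimes f_s$, acting on $T\in L(Y^*,X)$ by $\sum_s x_s^*(T(f_s))$. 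So I may assume the given relatively weakly open set contains a basic one $W_0=\{T\in B_{X\iten Y}:|\phi_l(T)-\phi_l(u)|<\delta,\ 1\le l\le p\}\ni u$, with $\phi_l=\sum_{s=1}^m x_{l,s}^*\otimes f_s$, and it suffices to produce $v\in W_0$ with $\norm{v-u}>2-\varepsilon$.

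First I would fix the diametral direction. Since $B_{Y^*}$ is compact, $\norm{u}=1$ is attained: choose $y_0^*\in S_{Y^*}$ with $\norm{u(y_0^*)}=1$, set $x_0:=u(y_0^*)$ and $E:=u(Y^*)$, a finite dimensional subspace of $X$ containing $x_0$, and pick $f_0\in S_{X^*}$ with $f_0(x_0)=1$. The idea is to build an operator $S\colon E\longrightarrow X$ of norm $<1$ that almost fixes $E$ when tested against the $x_{l,s}^*$ but sends $x_0$ essentially to $-x_0$; then $v:=S\circ u\in L(Y^*,X)=X\iten Y$ will lie in $W_0$ while $\norm{v-u}\ge\norm{(v-u)(y_0^*)}=\norm{S(x_0)-x_0}$ is close to $2$. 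Crucially, because $u$ maps into $E$, I only need $S$ defined on the finite dimensional space $E$, never on all of $X$, which sidesteps any norm-increasing extension.

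To produce $S$ I would run the WODP of $X^*$ and then transport the resulting operator to the predual side via Theorem~\ref{theo:preliojapol}. After normalising the finitely many $x_{l,s}^*$, apply the WODP to the slice $S(B_{X^*},x_0,\alpha)=\{g\in B_{X^*}:g(x_0)>1-\alpha\}$ (nonempty since $\norm{x_0}=1$), with the points to be fixed being the normalised $x_{l,s}^*$ and with target $-f_0\in B_{X^*}$; using the observation that in the WODP one may take the operator of norm $<1$, this yields $g\in S(B_{X^*},x_0,\alpha)$ and $\Phi\colon X^*\longrightarrow X^*$ with $\norm{\Phi}<1$, $\Phi(x_{l,s}^*)\approx x_{l,s}^*$ and $\Phi(g)\approx -f_0$. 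Let $\iota\colon E\hookrightarrow X$ be the inclusion and put $T:=\iota^*\circ\Phi\colon X^*\longrightarrow E^*$. The key point is that $T$ is automatically finite-rank, its range lying in the finite dimensional $E^*$, with $\norm{T}<1$; hence Theorem~\ref{theo:preliojapol}, applied with $F=\operatorname{span}(\{x_{l,s}^*\}\cup\{g\})\subseteq X^*$, produces a finite-rank $S\colon E\longrightarrow X$ with $\norm{S}<1$ and $S^*(x^*)=T(x^*)$ for every $x^*\in F$.

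It remains to verify the three requirements on $v=S\circ u$, which are routine unwindings of $S^*|_F=\iota^*\Phi|_F$. Membership in the ball is immediate from $\norm{S}<1$. For membership in $W_0$ one computes, for each $l$, that $\phi_l(v)-\phi_l(u)=\sum_s\langle u(f_s),\,Tx_{l,s}^*-x_{l,s}^*|_E\rangle$, which is small because $Tx_{l,s}^*=\iota^*\Phi(x_{l,s}^*)\approx\iota^*(x_{l,s}^*)=x_{l,s}^*|_E$. Finally, for the distance, $\norm{v-u}\ge\norm{S(x_0)-x_0}\ge|\langle S(x_0),g\rangle-\langle x_0,g\rangle|=|\langle x_0,\Phi(g)\rangle-\langle x_0,g\rangle|\approx|\langle x_0,-f_0\rangle-1|=2$, since $g\in S(B_{X^*},x_0,\alpha)$ forces $\langle x_0,g\rangle\approx1$. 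Choosing the approximation parameters small enough in terms of $\delta$ and $\varepsilon$ closes the argument. The delicate point is precisely the construction of $S$: everything hinges on composing the WODP operator on $X^*$ with the restriction $\iota^*$ so as to land in the finite dimensional $E^*$, which makes Theorem~\ref{theo:preliojapol} applicable, together with the observation that $S$ is only ever needed on $E=u(Y^*)$.
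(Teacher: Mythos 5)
Your proposal is correct and follows essentially the same route as the paper's own proof: identify $(X\iten Y)^*$ with $X^*\pten Y^*$ via \cite[Theorem 5.33]{ryan} to reduce to neighbourhoods defined by finite tensors, apply the WODP of $X^*$ to the slice of $B_{X^*}$ determined by $x_0=u(y_0^*)$ (fixing the finitely many functionals from the neighbourhood, with target the negative of a functional norming $x_0$), and then use Theorem~\ref{theo:preliojapol} to convert the composition with the WODP operator into a genuine element of $X\iten Y$, with the same final diametral estimate. The only differences are technical conveniences: you attain the norm exactly via compactness of $B_{Y^*}$ (the paper settles for $T(x_0^*)(y_0^*)>1-\varepsilon$), and you apply Theorem~\ref{theo:preliojapol} to $\iota^*\circ\Phi\colon X^*\longrightarrow E^*$, exploiting finite-dimensionality of $E=u(Y^*)$, and form $v=S\circ u$, whereas the paper applies it to $T\circ\phi\colon X^*\longrightarrow Y$, exploiting finite-dimensionality of $Y$, and takes $G=S^*$ directly.
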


\begin{proof}
Let $\mathcal U$ be a non-empty weakly open subset of $B_{X\iten Y}$ and let $T\in \mathcal U\cap S_{X\iten Y}$. Let us prove that, given any $\varepsilon>0$, there exists $G\in\mathcal U$ such that $\Vert T-G\Vert>2(1-\varepsilon)-\varepsilon$. In order to do so we can assume that
$$\mathcal U:=\left\{G\in B_{X\iten Y}: \vert B_i(G)-B_i(T_0)\vert<\xi\ 1\leq i\leq n\right\},$$
where $B_1,\ldots, B_n\in (X\iten Y)^*=X^*\pten Y^*$ \cite[Theorem 5.33]{ryan}, $\xi>0$ and $T_0\in B_{X\iten Y}$. Since $T\in\mathcal U$ we infer $\vert B_i(T)-B_i(T_0)\vert<\xi$ holds for $1\leq i\leq n$, so we can find $\eta>0$ small enough to guarantee  $\vert B_i(T)-B_i(T_0)\vert<\xi-\eta$ holds for every $1\leq i\leq n$. 

On the other hand, since $B_i\in X^*\pten Y^*$ we can assume, up to a density argument, that $B_i:=\sum_{j=1}^m x_{ij}^*\otimes y_{ij}^*$ holds for certain $m\in\mathbb N$ and certain $x_{ij}^*\in X^*, y_{ij}^*\in Y^*$. Now select $\delta>0$ such that $\delta<\varepsilon$ and that $\delta\sum_{j=1}^m \Vert y_{ij}^*\Vert<\eta$ holds for every $1\leq i\leq n$.

Moreover find $x_0^*\in S_{X^*}$ and $y_0^*\in S_{Y^*}$ such that $T(x_0^*)(y_0^*)>1-\varepsilon$.

The assumption that $X^*$ has the WODP allows us to find $x^*\in \{z^*\in B_{X^*}: T(z^*)(y_0^*)>1-\varepsilon\}$ and a continuous operator $\phi:X^*\longrightarrow X^*$ such that $\Vert \phi\Vert< 1$, that $\Vert \phi(x_{ij}^*)-x_{ij}^*\Vert<\delta$ and $\Vert\phi(x^*)+x_0\Vert<\delta$. Define $T\circ\phi$, which is a finite-rank continuous operator with $\Vert T\circ\phi\Vert\leq \Vert T\Vert \Vert\phi\Vert<1$. An application of Theorem~\ref{theo:preliojapol} allows us to find an operator $S:Y^*\longrightarrow X$ such that $\Vert S^*\Vert\leq 1$ and $S^*(x_{ij}^*)=T\circ\phi(x_{ij}^*)$ holds for every $1\leq i\leq n, 1\leq j\leq m$ and $S^*(x^*)=T\circ\phi(x^*)$. Observe that $S^*:X^*\longrightarrow Y=Y^{**}$ is a finite-rank and $w^*-w^*$ continuous operator, so $S^*\in X\iten Y$.

Moreover, it follows that
\[\begin{split}
\vert B_i(S^*-T)\vert & =\left\vert \sum_{j=1}^m S^*(x_{ij}^*)(y_{ij}^*)-T(x_{ij}^*)(y_{ij}^*)\right\vert \\
& \leq \sum_{j=1}^m \vert S^*(x_{ij}^*)(y_{ij}^*)-T(x_{ij}^*)(y_{ij}^*)\vert \\
& =  \sum_{j=1}^m \vert (T\circ \phi)(x_{ij}^*)(y_{ij}^*)-T(x_{ij}^*)(y_{ij}^*)\vert\\
& \leq \sum_{j=1}^m \Vert T\Vert \Vert \phi(x_{ij}^*)-x_{ij}^*\Vert \Vert y_{ij}^*\Vert\\
& <\delta\sum_{j=1}^m \Vert y_{ij}^*\Vert<\eta.
\end{split}\]
Since $\vert B_i(T-T_0)\vert<\xi-\eta$ we get $ \vert B_i(S^*-T_0)\vert<\xi$, and the arbitrariness of $i$ implies that $G:=S^*\in \mathcal U$. Let us prove that $\Vert T-G\Vert>2(1-\varepsilon)-\varepsilon$. Indeed, 
\[\begin{split}\Vert T-G\Vert & \geq (T- G)(x^*)(y_0^*)\\
&  =T(x^*)(y_0^*)-S^*(x^*)(y_0^*)\\
&  =T(x^*)(y_0^*)-(T\circ\phi)(x^*)(y_0^*)\\
& =T(x^*)(y_0^*)+T(x_0^*)(y_0^*)-(T(x_0^*)(y_0^*)+T(\phi(x^*))(y_0^*))\\
& >2(1-\varepsilon)-\Vert T\Vert\Vert x_0^*+\phi(x^*)\Vert\Vert y_0^*\Vert
\\
& >2(1-\varepsilon)-\varepsilon.
\end{split}\]
as desired.
\end{proof} 
 
As stated before, it is time to remove the finite-dimensionality assumption in the above theorem to obtain the following result.

\begin{theorem}\label{theo:injectivegeneral}
Let $X$ be a Banach space such that $X^*$ has the WODP and let $Y$ be any Banach space. Then $X\iten Y$ has the DD2P.
\end{theorem}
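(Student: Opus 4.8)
The plan is to reduce to the finite-dimensional case already settled in Theorem~\ref{theo:finitediminjective} by exploiting that the injective tensor product respects subspaces. First I would fix a non-empty relatively weakly open set $\mathcal U\subseteq B_{X\iten Y}$, a point $T\in\mathcal U\cap S_{X\iten Y}$ and $\varepsilon>0$; since $T\in\mathcal U$ we may assume $\mathcal U=\{G\in B_{X\iten Y}:\vert B_i(G)-B_i(T)\vert<\xi,\ 1\leq i\leq n\}$ for suitable $B_1,\dots,B_n\in(X\iten Y)^*$ and $\xi>0$. The goal is to produce $G\in\mathcal U$ with $\Vert T-G\Vert>2-\varepsilon$.

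Next, I would replace $T$ by a finitely supported approximation. Because $X\iten Y$ is the norm-closure of $X\otimes Y$, for any $\rho>0$ there is $\tilde T\in X\otimes Y$, a finite sum of elementary tensors, with $\Vert T-\tilde T\Vert<\rho$. Letting $Z\subseteq Y$ be the finite-dimensional linear span of the $Y$-coordinates of $\tilde T$, we have $\tilde T\in X\otimes Z\subseteq X\iten Z$. Normalising, set $\hat T:=\tilde T/\Vert\tilde T\Vert$; since $\Vert T\Vert=1$ and $\Vert T-\tilde T\Vert<\rho$, we get $\vert\Vert\tilde T\Vert-1\vert<\rho$, so $\hat T\in S_{X\iten Z}$ and $\Vert T-\hat T\Vert\leq 2\rho$. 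Thus, choosing $\rho$ small enough, I can guarantee both $\Vert T-\hat T\Vert<\varepsilon/2$ and --- using the norm-continuity of the finitely many functionals $B_i$ --- that $\hat T\in\mathcal U$.

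Now I would invoke that the injective tensor product respects subspaces: the inclusion $\iota\colon Z\hookrightarrow Y$ being an isometry, $I_X\otimes\iota$ embeds $X\iten Z$ isometrically into $X\iten Y$, so $X\iten Z$ is a closed subspace of $X\iten Y$ and its relative weak topology coincides with its own weak topology. Consequently $\mathcal V:=\mathcal U\cap B_{X\iten Z}$ is a non-empty relatively weakly open subset of $B_{X\iten Z}$ containing $\hat T\in S_{X\iten Z}$. Since $Z$ is finite-dimensional and $X^*$ has the WODP, Theorem~\ref{theo:finitediminjective} tells us that $X\iten Z$ has the DD2P; applying it to $\mathcal V$, $\hat T$ and $\varepsilon/2$ yields $G\in\mathcal V\subseteq\mathcal U$ with $\Vert\hat T-G\Vert>2-\varepsilon/2$. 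As the embedding is isometric this norm is computed equally in $X\iten Y$, and the triangle inequality gives $\Vert T-G\Vert\geq\Vert\hat T-G\Vert-\Vert T-\hat T\Vert>2-\varepsilon$, as required.

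The main obstacle I anticipate is the bookkeeping around the passage to the subspace: ensuring that the normalised finite-rank approximation $\hat T$ remains inside the prescribed weak neighbourhood $\mathcal U$ while lying exactly on $S_{X\iten Z}$, and confirming that the relative weak topology of the closed subspace $X\iten Z$ agrees with its intrinsic weak topology (a Hahn--Banach restriction/extension argument) so that $\mathcal V$ is genuinely relatively weakly open and the finite-dimensional DD2P applies verbatim.
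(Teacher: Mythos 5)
Your proposal is correct and follows essentially the same route as the paper's own proof: approximate $T$ by a finite sum of elementary tensors, pass to the finite-dimensional span $Z$ of the $Y$-coordinates, use that the injective tensor product respects subspaces, and apply Theorem~\ref{theo:finitediminjective}. In fact your write-up makes explicit the normalisation and ``$\hat T$ stays in $\mathcal U$'' bookkeeping that the paper compresses into a single ``we can assume with no loss of generality that $z\in X\otimes Y$''.
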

 
\begin{proof}
Let $\mathcal U$ be a non-empty weakly open subset of $X\iten Y$ such that $\mathcal U\cap B_{X\iten Y}\neq \emptyset$ and select any $z\in \mathcal U\cap S_{X\iten Y}$ and $\varepsilon>0$, and let us prove that there exists $z'\in \mathcal U\cap B_{X\iten Y}$ such that $\Vert z-z'\Vert>2-\varepsilon$. In order to do so, we can assume with no loss of generality that $z\in X\otimes Y$, that is, that $z=\sum_{i=1}^n x_i\otimes y_i$ for certain $x_1,\ldots, x_n\in X, y_1,\ldots, y_n\in Y$. Set $E:= \spann\{y_1,\ldots, y_n\}$, and observe that $z\in\mathcal U\cap S_{X\iten E}$ (recall that the injective tensor product respects subspaces). Since $E$ is finite dimensional and $\mathcal U\cap B_{X\iten E}$ is a non-emtpy relatively weakly open subset of $B_{X\iten E}$, Theorem~\ref{theo:finitediminjective} implies that there exists $z'\in \mathcal U\cap B_{X\iten E}$ such that $\Vert z-z'\Vert_{X\iten E}>2-\varepsilon$. The fact that the injective tensor product respects subspaces implies that $z'\in \mathcal U\cap B_{X\iten Y}$ and $\Vert z-z'\Vert>2-\varepsilon$, as desired.
\end{proof} 

In the case that $X=L_1(\mu)$ we have the following characterisation.

\begin{corollary}\label{coro:injeL1space}
Let $(\Omega,\Sigma,\mu)$ a localizable $\sigma$-finite measure space. The following are equivalent:
\begin{enumerate}
\item $L_1(\mu)\iten X$ has the DD2P for every  Banach space $X$.
\item $L_1(\mu)$ has the DD2P.
\item $L_1(\mu)$ has the Daugavet property.
\item $\mu$ does not contain any atom.
\end{enumerate}
\end{corollary}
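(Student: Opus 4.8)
The plan is to establish the cyclic chain $(4)\Rightarrow(1)\Rightarrow(2)\Rightarrow(4)$, which gives the equivalence of $(1)$, $(2)$ and $(4)$, and then to close the corollary by recalling the classical equivalence $(3)\Leftrightarrow(4)$. The only implication that genuinely uses the machinery of this section is $(4)\Rightarrow(1)$; the remaining ones are either formal or classical facts about $L_1$-spaces.

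For $(4)\Rightarrow(1)$, suppose $\mu$ is atomless. Since $\mu$ is localizable we have the isometric identification $(L_1(\mu))^*=L_\infty(\mu)$, so by Theorem~\ref{theo:injectivegeneral} (applied with $X=L_1(\mu)$) it suffices to check that $L_\infty(\mu)$ has the WODP. Now $L_\infty(\mu)$ is an $L_1$-predual, since its dual is an abstract $L$-space and hence an $L_1$-space by Kakutani's representation theorem; and because $\mu$ is atomless it is well known that $L_\infty(\mu)$ enjoys the Daugavet property (equivalently, $L_\infty(\mu)=C(K)$ with $K$ perfect). An $L_1$-predual with the Daugavet property has the WODP by the examples recorded after the definition of the WODP (from \cite[Section 5]{mr22}), so Theorem~\ref{theo:injectivegeneral} yields that $L_1(\mu)\iten X$ has the DD2P for every Banach space $X$.

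The implication $(1)\Rightarrow(2)$ is immediate: specialising to the scalar field, $L_1(\mu)\iten\mathbb R$ is isometric to $L_1(\mu)$, so the DD2P of the former is exactly the DD2P of the latter. For $(2)\Rightarrow(4)$ I argue by contraposition. If $A\in\Sigma$ is an atom, then $e:=\chi_A/\mu(A)$ is a strongly exposed point of $B_{L_1(\mu)}$: it is exposed by $\chi_A\in S_{L_\infty(\mu)}$, and for $f\in B_{L_1(\mu)}$ the condition $\int_A f\,d\mu>1-\delta$ forces both $\int_{A^c}\lvert f\rvert\,d\mu<\delta$ and $\lvert\int_A f\,d\mu-1\rvert<\delta$, so the slices $\{f\in B_{L_1(\mu)}:\int_A f\,d\mu>1-\delta\}$ shrink in norm to $\{e\}$ as $\delta\to0$. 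A strongly exposed point is a point of continuity, hence $e$ admits relatively weakly open neighbourhoods in $B_{L_1(\mu)}$ of arbitrarily small diameter, contradicting the D2P and \emph{a fortiori} the DD2P. Thus the DD2P of $L_1(\mu)$ forces $\mu$ to be atomless.

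Finally, $(3)\Leftrightarrow(4)$ is the classical characterisation of the Daugavet property for $L_1$-spaces: $L_1(\mu)$ has the Daugavet property if and only if $\mu$ is atomless. (The direction $(3)\Rightarrow(4)$ also follows from the work above, since the Daugavet property implies the DD2P—apply the geometric characterisation recalled in Section~2 to $-x$ in place of $x$—which by $(2)\Rightarrow(4)$ gives that $\mu$ is atomless.) The main obstacle in the whole argument is the verification underlying $(4)\Rightarrow(1)$, namely identifying $(L_1(\mu))^*=L_\infty(\mu)$ correctly and confirming that $L_\infty(\mu)$ has the WODP; once this is secured, the remaining implications are either trivial or standard facts about denting points in $L_1$.
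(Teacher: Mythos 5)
Your proposal is correct and follows essentially the same route as the paper: the one substantive implication $(4)\Rightarrow(1)$ is handled exactly as in the text, via Theorem~\ref{theo:injectivegeneral} together with the fact that $L_\infty(\mu)$ is an $L_1$-predual with the Daugavet property (hence has the WODP, which is precisely what the paper cites from the literature), while the remaining implications reduce to the classical equivalence, for $L_1(\mu)$, of atomlessness, the Daugavet property, and the absence of denting points in the unit ball. The only cosmetic difference is that you verify $(2)\Rightarrow(4)$ by hand, exhibiting $\chi_A/\mu(A)$ as a strongly exposed point (which implicitly uses that any integrable function is a.e.\ constant on an atom), whereas the paper simply cites the denting-point characterisation.
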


\begin{proof}
(1)$\Rightarrow$(2) is immediate, whereas (2)$\Leftrightarrow$(3)$\Leftrightarrow$(4) follows since $L_1(\mu)$ has the Daugavet property if, and only if, $\mu$ does not contain any atom, which is in turn equivalent to the fact that $B_{L_1(\mu)}$ does not have any denting point (see e.g. \cite[Lemma 3.2]{ahlp}). Finally (4)$\Rightarrow$(1) follows since $L_1(\mu)^*=L_\infty(\mu)$ has the WODP by \cite[Proposition 5.9]{mr22} if $\mu$ does not contain any atom. 
\end{proof}

\begin{remark}
Observe that in \cite[Theorem 3.1]{rueda24} it is proved that there exists a finite-dimensional Banach space $E$ such that $L_1^\mathbb C([0,1])\iten E$ fails the SD2P. Consequently, Theorem~\ref{theo:injectivegeneral} can not be improved to get the SD2P.
\end{remark}

In Theorem~\ref{theo:injectivegeneral} we have only made assumptions on one of the factors. In the following theorem we prove, however, that if we require that both $X^*$ and $Y^*$ have the WODP, we can improve the  thesis to get the Daugavet property in $X\iten Y$. Notice that we will not be able to make use of the identification of $(X\iten Y)^*=X^*\pten Y^*$ since \cite[Theorem 5.33]{ryan} does not apply here by the absense of any Radon-Nikodym property. However, we will make use of this in a local way.

\begin{theorem}\label{theo:daugainjective}
Let $X$ and $Y$ be two Banach spaces such that $X^*$ and $Y^*$ have the WODP. Then $X\iten Y$ has the Daugavet property.
\end{theorem}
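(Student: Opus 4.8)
The plan is to verify the Daugavet property through its geometric characterisation from \cite[Lemma 2.1]{kssw}: given $z \in S_{X \iten Y}$, a slice $S$ of $B_{X \iten Y}$, and $\varepsilon > 0$, I must find $w \in S$ with $\Vert z + w \Vert > 2 - \varepsilon$. Since slices can be replaced by non-empty relatively weakly open sets and these have been successfully handled in Theorem~\ref{theo:finitediminjective}, the natural strategy is to reduce to a finite-dimensional situation on \emph{both} factors simultaneously and then imitate that argument. First I would approximate $z$ by an element of $X \otimes Y$, say $z = \sum_{i=1}^n x_i \otimes y_i$, and set $E := \spann\{x_1, \ldots, x_n\}$ and $F := \spann\{y_1, \ldots, y_n\}$, so that $z \in S_{E \iten F}$ and the injective tensor product respects subspaces. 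The slice $S$ is induced by some $B \in (X \iten Y)^*$, and as in the previous proof I would work locally: the linear functional $B$ restricts to an element that, on the finite-dimensional corner, can be represented via \cite[Theorem 5.33]{ryan} as a finite sum $\sum_j x_j^* \otimes y_j^*$ of simple tensors in $X^* \pten Y^*$.

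Next I would exploit the WODP on both duals in tandem. The element $w$ we seek should be of the form $w = \sum_k a_k \otimes b_k$ built so that, evaluated against a nearly-norming pair $(x_0^*, y_0^*) \in S_{X^*} \times S_{Y^*}$ with $z(x_0^*)(y_0^*) > 1 - \varepsilon$, it satisfies $w(x_0^*)(y_0^*) \approx 1$ \emph{with the same sign as} $z(x_0^*)(y_0^*)$, forcing $(z+w)(x_0^*)(y_0^*)$ close to $2$. The mechanism from Theorem~\ref{theo:finitediminjective} produces an operator $\phi \colon X^* \to X^*$ with $\Vert \phi \Vert < 1$ that nearly fixes the finitely many relevant functionals $x_{ij}^*$ while sending a chosen near-norming $x^*$ close to a prescribed target; here I would apply this simultaneously on $X^*$ via some $\phi$ and on $Y^*$ via some $\psi$, obtaining a candidate $w = (\phi \otimes \psi)$-type modification of $z$ realised as an adjoint through Theorem~\ref{theo:preliojapol}. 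The functional-value estimates $\vert B(w - z) \vert < \eta$ then keep $w$ inside the weakly open set (hence inside the slice after passing to relatively weakly open sets as permitted), exactly as in the finite-dimensional proof, while the evaluation against $(x^*, y_0^*)$ or a suitably chosen pair delivers $\Vert z + w \Vert > 2 - \varepsilon$.

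The main obstacle, and the reason this is genuinely harder than Theorem~\ref{theo:injectivegeneral}, is that we now need the \emph{additive} conclusion $\Vert z + w \Vert \approx 2$ rather than a large diameter, and we must control \emph{both} coordinates at once rather than perturbing only the $X^*$-variable and invoking Theorem~\ref{theo:preliojapol} once. Concretely, when I apply Theorem~\ref{theo:preliojapol} to pass from the composed finite-rank operator on $X^*$ (or $Y^*$) back to an element of the tensor product, I must ensure the two local representations are compatible so that the resulting $w$ is simultaneously a legitimate element of $B_{X \iten Y}$ \emph{and} pairs correctly against the test functionals. Coordinating the two WODP operators $\phi$ and $\psi$ so their combined effect on the bilinear action $z(\cdot)(\cdot)$ has the right sign and magnitude — without the convenient identification $(X \iten Y)^* = X^* \pten Y^*$ available globally (since no RNP is assumed) — is the delicate point; I expect it to require working on a finite-dimensional subspace where \cite[Theorem 5.33]{ryan} does apply and then transferring back via ``the injective tensor product respects subspaces,'' combined with a careful two-sided application of the finite-rank-as-adjoint lemma.

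Once these pieces are assembled, the final inequality follows from a chain of estimates entirely parallel to the display at the end of the proof of Theorem~\ref{theo:finitediminjective}, with the key line being that $(z+w)(x^*)(y^*)$ decomposes into a term near $z(x_0^*)(y_0^*) > 1 - \varepsilon$ plus a term near $1$ coming from the WODP-constructed piece, minus error terms bounded by $\Vert \Vert$-estimates on the perturbations, yielding the required $2 - \varepsilon$ lower bound. The arbitrariness of $\varepsilon$ and of the slice $S$ then gives the Daugavet property via \cite[Lemma 2.1]{kssw}.
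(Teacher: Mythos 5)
Your proposal founders on the mechanism by which the candidate $w$ is kept inside the slice. You build $w$ as a ``$(\phi\otimes\psi)$-type modification of $z$'' and argue that the estimate $\vert B(w-z)\vert<\eta$ ``keeps $w$ inside the weakly open set\dots exactly as in the finite-dimensional proof''. That mechanism is specific to the DD2P situation of Theorem~\ref{theo:finitediminjective}, where the point being perturbed already lies in the weakly open set $\mathcal U$, so that near-invariance of the defining functionals keeps the perturbation inside $\mathcal U$. In the Daugavet characterisation the fixed point $z$ and the slice $\{u\in B_{X\iten Y}: B(u)>1-\alpha\}$ are completely independent: $B$ may, for instance, nearly norm $-z$, in which case $\vert B(w-z)\vert<\eta$ places $w$ far from the slice. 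The object that must be perturbed is an element \emph{already in the slice} --- the paper's proof begins by selecting $S\in X\otimes Y$ with $B(S)>1-\alpha$ --- while the fixed element $z$ enters only through the $w^*$-slices of $B_{X^*}$ and $B_{Y^*}$ that it defines. Relatedly, your intended alignment $w(x_0^*)(y_0^*)\approx 1$ at the pair norming $z$ is not something the WODP can deliver: the WODP lets you prescribe the \emph{target} of the operator, but the point sent to that target is an output of the property (some point of a prescribed slice), not an input; the common pair at which $z$ and $w$ are simultaneously large must therefore be produced by the construction, not fixed in advance.

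This is also why the paper's argument is sequential rather than ``in tandem'', and why your plan of coordinating $\phi$ and $\psi$ simultaneously cannot be completed as stated. Writing $T=z$: first, WODP on $X^*$ is applied with the $w^*$-slice $\{z^*\in B_{X^*}: y_0^*(T(z^*))>1-\varepsilon\}$, with the functionals $x_j^*$ locally representing $B$ on $X\iten F$ (where $F$ is spanned by a representation of the slice element $S$) as the points to be nearly fixed, and with $u_0^*$ (norming $S$) as the target; Theorem~\ref{theo:preliojapol} then turns $S\circ\phi$ into an element $V$ of the slice with $v_0^*(V(x^*))>1-\varepsilon$. Only after this step can the second $w^*$-slice $\{w^*\in B_{Y^*}: w^*(T(x^*))>1-\varepsilon\}$ even be written down, since it depends on the $x^*$ just found; WODP on $Y^*$, applied to $V$ in the same fashion, yields $G$ in the slice and $y^*$ with $x^*(G(y^*))>1-\varepsilon$, whence $(T+G)(x^*)(y^*)>2(1-\varepsilon)$. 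Note finally that your finite-dimensional reduction is taken on the wrong element and in the wrong shape: restricting $B$ to $E\iten F$ with \emph{both} factors finite-dimensional only controls $B$ on $E\iten F$, where no element produced by the WODP machinery will live; the paper instead keeps one factor infinite-dimensional at each stage ($X\iten F$, then $E\iten Y$, with $F$ and $E$ spanned by representations of the slice elements $S$ and $V$ respectively), and it is on those subspaces that \cite[Theorem 5.33]{ryan} is invoked.
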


\begin{proof}
Let $T\in S_{X\iten Y}$, $\varepsilon,\alpha>0$ and $B\in (X\iten Y)^*$ with $\Vert B\Vert=1$. Let us find $G\in S_{X\iten Y}$ such that $B(G)>1-\alpha$ and $\Vert T+G\Vert>2(1-\varepsilon)$.

Let us start by choosing $x_0^*\in S_{X^*}$ and $y_0^*\in S_{Y^*}$ such that $y_0^*(T(x_0^*))>1-\varepsilon$. Now select $S\in S_{X\iten Y}$ satisfying that $B(S)>1-\alpha$ and $S\in X\otimes Y$ (up to a density argument). Since $\Vert S\Vert=1$ we can, once again, find $u_0^*\in S_{X^*}$ and $v_0^*\in S_{Y^*}$ such that $v_0^*(S(u_0^*))>1-\varepsilon$. On the other hand, since $B(S)>1-\alpha$ then $B(S)>1-\alpha+\eta_0$ for certain $\eta_0>0$. 

By the condition $S\in X\otimes Y$ we get $S=\sum_{i=1}^n x_i\otimes y_i$ for certain $n\in\mathbb N$, $x_1,\ldots, x_n\in X$ and $y_1,\ldots, y_n\in Y$. Call $F:=\spann\{y_1,\ldots, y_n\}\subseteq Y$. Since the injective tensor product does respect subspaces we can see $S\in X\iten F\subseteq X\iten Y$. This allows us to note that
$$1-\alpha+\eta_0<B(S)=B_{|X\iten F}(S).$$
As $B_{|X\iten F}\in (X\iten F)^*=X^*\pten F^*$ \cite[Theorem 5.33]{ryan}, we conclude that $B_{|X\iten F}\in \overline{\co}(B_{X^*}\otimes B_{F^*})$, consequently we can find suitable $p\in\mathbb N, x_1^*,\ldots, x_p^*\in B_{X^*}\setminus\{0\}, y_1^*,\ldots, y_p^*\in B_{F^*}\setminus \{0\}$ and $\lambda_1,\ldots\lambda_p\in [0,1]$ with $\sum_{j=1}^p \lambda_j=1$ satisfying that

\begin{equation}\label{eq:aprofunfindim}
\left\Vert B_{|X\iten F}-\sum_{j=1}^p \lambda_i x_i^*\otimes y_i^*\right\Vert<\frac{\eta_0}{4}.
\end{equation}
Consequently,
\begin{equation}\label{eq:evalaproxfin}
\sum_{j=1}^p \lambda_j x_j^*\otimes y_j^*(S)>1-\alpha+\frac{3\eta_0}{4}.
\end{equation}
Select $0<\delta<\min\{\frac{\eta_0}{2},v_0^*(S(u_0^*))-(1-\varepsilon)\}$.

Since $X^*$ has the WODP we can find $x^*\in \{z^*\in B_{X^*}: y_0^*(T(z^*))>1-\varepsilon\}$ (which is a $w^*$-slice of $B_{X^*}$ since $T$ is $w^*-w$ continuous) and a bounded operator $\phi:X^*\longrightarrow X^*$ with $\Vert \phi\Vert<1$ and such that
\begin{equation}\label{eq:aproWODPpuntosfun}
\Vert \phi(x_j^*)-x_j^*\Vert<\delta, 1\leq j\leq m,
\end{equation}
and
\begin{equation}\label{eq:aproWODPpuntosnorma}
\Vert \phi(x^*)-u_0^*\Vert<\delta.
\end{equation}
Observe that $S\circ\phi:X^*\longrightarrow F$ is a finite-rank operator with $\Vert S\circ\phi\Vert<1$. An application of Theorem~\ref{theo:preliojapol} yields a finite-rank operator $U:F^*\longrightarrow X$ such that $\vert \Vert U\Vert-\Vert S\circ\phi\Vert \vert<1-\Vert S\circ\phi\Vert$ and $U^*(z^*)=S(\phi(z^*))$ holds for every $z^*\in \spann\{x_1^*,\ldots, x_p^*, x^*\}$. Now $V=U^*:X^*\longrightarrow F^{**}=F$ is a $w^*-w^*$ continuous, so in particular $V\in X\iten F\subseteq X\iten Y$. Moreover $\Vert V\Vert<1$ so we get that
\[
\begin{split}
B(V)& =B_{|X\iten F}(V)\geq \left(\sum_{j=1}^p \lambda_j (x_j^*\otimes y_j^*)\right)(V)-\left\Vert B_{|X\iten F}-\sum_{j=1}^p \lambda_j x_j^*\otimes y_j^*\right\Vert\\
& \mathop{>}\limits^{\mbox{\tiny{\eqref{eq:aprofunfindim}}}}\sum_{j=1}^p \lambda_j U^*(x_j^*)(y_j^*)-\frac{\eta_0}{4}=\sum_{j=1}^p\lambda_j S(\phi(x_j^*))(y_j^*)-\frac{\eta_0}{4}\\
& \geq \sum_{j=1}^p \lambda_j S(x_j^*)(y_j^*)-\sum_{j=1}^p\lambda_j\Vert \phi(x_j^*)-x_j^*\Vert-\frac{\eta_0}{4}\\
& \mathop{>}\limits^{\mbox{\tiny{\eqref{eq:aproWODPpuntosfun}}} }\left(\sum_{j=1}^p \lambda_j x_j^*\otimes y_j^*\right)(S)-\delta-\frac{\eta_0}{4}\mathop{>}\limits^{\mbox{\tiny{\eqref{eq:evalaproxfin}}} }1-\alpha+\frac{3\eta_0}{4}-\delta-\frac{\eta_0}{4}>1-\alpha.
\end{split}
\] 
Moreover
\[
\begin{split}
v_0^*(V(x^*))=v_0^*(S(\phi(x^*)))& \geq v_0^*(S(u_0))-\Vert \phi(x^*)-u_0^*\Vert\\
& \mathop{>}\limits^{\mbox{\tiny{\eqref{eq:aproWODPpuntosnorma}}} }v_0^*(S(u_0))-\delta>1-\varepsilon.
\end{split}
\]
Consequently we have found $V\in B_{X\iten Y}$ (indeed in $X\otimes Y$, in other words, $V:X^*\longrightarrow Y$ has finite rank) such that $B(V)>1-\alpha$ and there exists $x^*\in S_{X^*}$ such that $y_0^*(T(x^*))>1-\varepsilon$ and $v_0^*(V(x^*))>1-\varepsilon$. In the rest of the proof we will repeat the above argument in order to find $S\in B_{X\iten Y}$ and $y^*\in S_{Y^*}$ such that $B(S)>1-\alpha$ and that $y^*(T(x^*))>1-\varepsilon$ and $y^*(S(x^*))>1-\varepsilon$ holds, from where the inequality $\Vert T+S\Vert>2(1-\varepsilon)$ will be clear and the proof will be finished.

Since $V\in X\otimes Y$ write $V=\sum_{i=1}^m a_i\otimes b_i$ for certain $a_1,\ldots a_m\in X$ and $b_1,\ldots, b_m\in Y$, so we see $V\in E\iten Y$ where $E:=\spann\{a_1,\ldots, a_m\}\subseteq X$. As before, we can find $\xi>0$ satisfying $1-\alpha+\xi<B(V)=B_{|E\iten Y}(V)$.

As before observe that $B_{|E\iten Y}\in (E\iten Y)^*=E^*\pten Y^*$ and the fact that $\Vert B\Vert\leq 1$ allows us to find $q\in\mathbb N, a_1^*,\ldots, a_q^*\in B_{E^*}\setminus\{0\}$, $b_1^*,\ldots, b_q^*\in B_{Y^*}\setminus\{0\}$ and $\alpha_1,\ldots, \alpha_q\in [0,1]$ with $\sum_{k=1}^q\alpha_k=1$ such that
$$\left\Vert B_{|E\iten Y}-\sum_{k=1}^q \alpha_k a_k^*\otimes b_k^* \right\Vert<\frac{\xi}{4}.$$
Now we visualise $V:Y^*\longrightarrow E$ and note that
$$\sum_{k=1}^q \alpha_k a_k^*(V(b_k^*))>1-\alpha+\frac{3\xi}{4}.$$
Choose  $0<\gamma<\min\{\frac{\xi}{2},v_0^*(S(x^*))-(1-\varepsilon)\}$.

Since $Y^*$ has the WODP we can find an element $y^*\in \{w^*\in B_{Y^*}: w^*(T(x^*))>1-\varepsilon\}$ (which is a $w^*$-slice of $B_{Y^*}$ since $T:Y^*\longrightarrow X$ is $w^*-w$ continuous) and operator $\psi:Y^*\longrightarrow Y^*$ with $\Vert \psi\Vert<1$ and such that
$$\Vert \psi(b_k^*)-b_k^*\Vert<\gamma, 1\leq k\leq q$$
and
$$\Vert \psi(y^*)-v_0^*\Vert<\gamma.$$
Moreover since $V\circ\psi:Y^*\longrightarrow E$ is a finite-rank operator with $\Vert V\circ\psi\Vert<1$ we can find, with a new appeal to Theorem~\ref{theo:preliojapol}, a bounded operator $W:E^*\longrightarrow Y$ such that $\vert \Vert W\Vert-\Vert V\circ\psi\Vert\vert<1-\Vert V\circ\psi\Vert$ and such that $W^*(w^*)=V\circ\psi(w^*)$ holds for every $w^*\in \spann\{b_1^*,\ldots, b_q^*,y^*\}$. Our final operator $G:=W^*$ satisfies $B(G)>1-\alpha$ and $x^*(G(y^*))>1-\varepsilon$ by a similar argument to the previously exposed for $V$. \end{proof}

Now several comments are pertinent.

\begin{remark}\label{remark:mejoratrada}
Theorem~\ref{theo:daugainjective} improves \cite[Theorem 1.1]{rtv21}, where it is proved that $L_1(\Omega_1,\Sigma_1,\mu_1)\iten L_1(\Omega_2,\Sigma_2,\mu_2)$ has the Daugavet property if $\mu_i$ contains no atom for $i=1,2$. Indeed, in the above case the space $L_1(\Omega_i,\Sigma_i,\mu_i)^*=L_\infty(\Omega_i,\Sigma_i,\mu_i)$ has the WODP for $i=1,2$ by \cite[Proposition 5.9]{mr22} since $L_\infty(\Omega_i,\Sigma_i,\mu_i)$ are $L_1$-preduals enjoying the Daugavet property \cite[p. 78, Examples (b)]{werner01}. This means that Theorem~\ref{theo:daugainjective} covers \cite[Theorem 1.1]{rtv21}.
\end{remark}

\begin{remark}\label{remark:wodpuseful}
In \cite[Theorem 5.4]{mr22} it is proved that if both $X$ and $Y$ have the WODP then $X\pten Y$ has the WODP, which makes the WODP a potential tool to give an affirmative answer to the question whether $X\pten Y$ has the Daugavet property if both $X$ and $Y$ have the Daugavet property. Notice that Theorem~\ref{theo:daugainjective} proves that the WODP is also a potential tool to solve the analogous question for the injective tensor product. Indeed, observe that a Banach space $X$ has the Daugavet property if, and only if, given any $x^*\in S_{X^*}$, any $\varepsilon>0$ and any weak-star slice $S$ of $B_{X^*}$ there exists $y^*\in S$ such that $\Vert y^*-x^*\Vert>2-\varepsilon$ (which is a weak-star version of the Daugavet property) \cite[Lemma 2.1]{kssw}. Moreover, observe that in the proof of Theorem~\ref{theo:daugainjective} it is not needed that $X^*$ has the WODP, but just a weak-star version of it (since the slices used in the proof are actually weak-star slices as we have pointed out in the proof). 
\end{remark}

\section{Projective tensor product}\label{section:projective}\bigskip

In this section we get examples of spaces $X$ and $Y$ such that $X\pten Y$ has the D2P, which can be seen as partial answers to the question of when a projective tensor product has the D2P, posed in an equivalent formulation in \cite[Question 4.2]{llr}. Let us start with the next result, which is probably a simple observation. In \cite[Theorem 4.1]{abr2011} it is proved that $C(K)\pten X$ has the D2P if $K$ is infinite and $X$ is non trivial. In the next result we obtain a generalisation replacing $C(K)$ with a general infinite-dimensional $L_1$-predual space. The proof is based on the fact that the bidual of an $L_1$-predual space is a $C(K)$ space, the above mentioned \cite[Theorem 4.1]{abr2011}, and weak-star density argument allowed by the metric approximation property of any $C(K)$ space. We include a complete proof since, even though this result may be known by experts, we have not found any explicit reference to it.

\begin{theorem}\label{theo:predul1general}
Let $X$ be an infinite $L_1$-predual and let $Y$ be a non-zero any Banach space. Then $X\pten Y$ has the D2P.
\end{theorem}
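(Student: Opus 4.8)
The plan is to reduce the whole statement to the known diameter two property of $C(K)\pten Y$ via the bidual. First I would record the two structural inputs: since $X$ is an infinite-dimensional $L_1$-predual, its dual is an $L_1$-space and hence $X^{**}$ is isometric to a $C(K)$ space with $K$ an infinite compact Hausdorff space (from $X^*\cong L_1(\mu)$ and $X^{**}\cong L_\infty(\mu)\cong C(K)$, infinite-dimensionality of $X$ forcing $K$ infinite); and $C(K)\pten Y$ has the D2P by \cite[Theorem 4.1]{abr2011}.

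The bridge between the two spaces is the isometric embedding $X\pten Y\hookrightarrow X^{**}\pten Y=C(K)\pten Y$. I would establish it by the operator-extension trick: every $T\in L(X,Y^*)=(X\pten Y)^*$ extends to $\widetilde T:=\pi\circ T^{**}\in L(X^{**},Y^*)=(X^{**}\pten Y)^*$, where $\pi\colon Y^{***}\to Y^*$ is the canonical norm-one projection; one checks $\Vert\widetilde T\Vert\le\Vert T\Vert$, that $\widetilde T$ agrees with $T$ on $X$, and that $\widetilde T$ is weak$^*$-to-weak$^*$ continuous. Testing against such extensions shows $\Vert z\Vert_{X\pten Y}=\Vert z\Vert_{X^{**}\pten Y}$ for $z\in X\otimes Y$, giving the embedding, and simultaneously shows that every basic weakly open set $W=\{z:\vert\Phi_j(z)-\Phi_j(z_0)\vert<\xi,\ 1\le j\le N\}$ of $B_{X\pten Y}$ is the trace on $B_{X\pten Y}$ of the weakly open set $\widetilde W$ of $B_{C(K)\pten Y}$ defined by the extensions $\widetilde\Phi_j$, with $z_0\in\widetilde W$.

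Now fix such a $W$, a point $z_0\in W$ (which, by density, I may take of the form $z_0=\sum_i x_i\otimes y_i\in X\otimes Y$) and $\varepsilon>0$. Applying the D2P of $C(K)\pten Y$ inside $\widetilde W$, together with density of $X^{**}\otimes Y$, I obtain $w^+,w^-\in\widetilde W$ lying in $E\otimes Y$ for some finite-dimensional $E\subseteq X^{**}$ and with $\Vert w^+-w^-\Vert>2-\varepsilon$. The decisive step is to transport $w^\pm$ back into $X\pten Y$ without losing either membership in $W$ or the distance near $2$. For this I would invoke the principle of local reflexivity (this is the role of the ``weak$^*$-density/metric approximation property of $C(K)$'' ingredient): only finitely many functionals of $X^*$ are relevant, namely the $\Phi_j^*(j_Y y_i)\in X^*$ attached to the finitely many $y_i$ occurring in $w^\pm$, so there is a $(1+\varepsilon)$-isomorphism $R\colon E\to X$ fixing $E\cap X$ and preserving these functionals. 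Setting $z^\pm:=(R\otimes\mathrm{id}_Y)(w^\pm)\in X\pten Y$ (rescaled to lie in $B_{X\pten Y}$), the functional-preservation gives $\Phi_j(z^\pm)\approx\widetilde\Phi_j(w^\pm)\approx\Phi_j(z_0)$, so $z^\pm\in W$, while $R\otimes\mathrm{id}_Y$ being a $(1+\varepsilon)$-isomorphism forces $\Vert z^+-z^-\Vert\ge(1+\varepsilon)^{-1}\Vert w^+-w^-\Vert>(1+\varepsilon)^{-1}(2-\varepsilon)$. Letting $\varepsilon\to0$ yields $\diam W=2$.

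The hard part is exactly this transport step. Abstract weak$^*$-density of $B_{X\pten Y}$ in $B_{C(K)\pten Y}$ is \emph{not} enough on its own, because the canonical map $X^{**}\pten Y\to(X\pten Y)^{**}$ is only contractive, so the distance between $w^+$ and $w^-$ may collapse when they are approximated weak$^*$ by elements of $X\pten Y$; only weak$^*$-lower semicontinuity of the norm survives, and it points the wrong way. Passing to a finite-dimensional $E$ and using a near-isometric local-reflexivity pullback (available precisely because the relevant data is finite and $C(K)=X^{**}$ has the metric approximation property) is what preserves the distance up to the factor $1+\varepsilon$ while keeping the finitely many functional constraints that place both points inside $W$.
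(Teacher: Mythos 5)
There is a genuine gap in your transport step, and it is exactly the collapse phenomenon you flagged for the naive weak$^*$-density argument, reappearing one level down. From the principle of local reflexivity you get an into-isomorphism $R\colon E\to X$ with $\Vert R\Vert\,\Vert R^{-1}\Vert\le 1+\varepsilon$, but the conclusion you draw from it, namely $\Vert z^+-z^-\Vert_{X\pten Y}\ge (1+\varepsilon)^{-1}\Vert w^+-w^-\Vert_{E\pten Y}$, does not follow: the projective tensor norm does \emph{not} respect subspaces. The map $R\otimes\mathrm{id}_Y\colon E\pten Y\to R(E)\pten Y$ is indeed a $(1+\varepsilon)$-isomorphism, but the canonical inclusion $R(E)\pten Y\to X\pten Y$ is merely contractive and can strictly (even drastically) decrease norms, so the distance between $z^+$ and $z^-$ measured in $X\pten Y$ may collapse even though it is close to $2$ in $E\pten Y$. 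A lower bound for a projective tensor norm must ultimately come from a functional, i.e.\ an operator in $B_{L(X,Y^*)}$ defined on all of $X$, and your construction produces no such operator; $R$ was only required to preserve the functionals $\Phi_j^*(j_Yy_i)$ needed for membership in $W$, which control nothing about the norm of $z^+-z^-$. Note also that the metric approximation property plays no role in local reflexivity (PLR holds for every Banach space); its real function in this theorem is the one you omitted.

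The paper closes precisely this hole by a different mechanism: since $X^{**}=C(K)$ has the MAP, the set $\{T^{**}:T\in B_{L(X,Y^*)}\}$ is norming for $X^{**}\pten Y$ (this is \cite[Lemma 2.3]{llr2}), so the large distance upstairs can be assumed to be witnessed by a functional of bidual form, $2-\varepsilon<T^{**}(z_1-z_2)$ with $T\in S_{L(X,Y^*)}$. Then each left factor $x_{ij}^{**}\in B_{X^{**}}$ is replaced by a Goldstine net in $B_X$; because $T^{**}$ and the constraint operators $T_k^{**}$ are weak$^*$-to-weak$^*$ continuous and only finitely many scalar conditions are involved, for a large index the resulting elements of $X\otimes Y$ lie in $W$ and are separated by $T$, which has norm one on $X\pten Y$, giving $\diam W\ge 2-\varepsilon$. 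Your argument can be repaired by importing this ingredient: before invoking PLR, use the MAP-based norming result to choose $T\in S_{L(X,Y^*)}$ with $T^{**}(w^+-w^-)>2-\varepsilon$, and add the finitely many functionals $T^*(j_Yy_i)\in X^*$ (for the $y_i$ appearing in $w^\pm$) to the list that $R$ must preserve; then $T(z^+-z^-)=T^{**}(w^+-w^-)>2-\varepsilon$ and $\Vert T\Vert\le 1$ yields the missing lower bound. As written, however, the proof is incomplete at its decisive step.
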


\begin{proof}
Let $W$ be a non-empty weakly open subset of $B_{X\pten Y}$. We can assume with no loss of generality that
$$W=\left\{ z\in B_{X\pten Y}: \vert T_k(z-z_0)\vert<\alpha_0, 1\leq k\leq p\right\},$$ for suitable $T_k\in S_{L(X,Y^*)}$, $1\leq k\leq p$, $z_0\in X\pten Y$ and $\alpha_0>0$. Since $X$ is an infinite-dimensional $L_1$-predual then $X^{**}=C(K)$ for some infinite compact Hausdorff set $K$ \cite[Theorem 6.1]{linds}. Define the weak open set
$$U:=\left\{z\in B_{X^{**}\pten Y}: \vert T_k^{**}(z-z_0)\vert<\alpha_0, 1\leq k\leq p\right\},$$
which is non-empty because $W$ is non-empty and since the canonical embedding $X\pten Y\subseteq X^{**}\pten Y$ is isometric \cite[Lemma 2.3]{llr2}. Since $X^{**}=C(K)$ we get that $U$ has diameter two by \cite[Theorem 4.1]{abr2011}. Consequently, there are two elements $z_i:=\sum_{j=1}^{n_i}\alpha_{ij} x_{ij}^{**}\pten y_{ij}\in U\cap \co(B_{X^{**}}\otimes B_Y)$, $i=1,2$, such that $\Vert z_1-z_2\Vert>2-\varepsilon$. Since $X^{**}$ has the metric approximation property we can find an operator $T\in S_{L(X,Y^*)}$ such that
$$2-\varepsilon<T^{**}(z_1-z_2)=\sum_{j=1}^{n_1}\alpha_{1j} T^{**}(x_{1j}^{**})(y_{1j})-\sum_{j=1}^{n_2}\alpha_{2j} T^{**}(x_{2j}^{**})(y_{2j}),$$
(see, e.g. \cite[Lemma 2.3]{llr2}). Since $B_X$ is $w^*$-dense in $B_{X^{**}}$ then, for every $i\in\{1,2\}$ and every $j\in\{1,\ldots, n_i\}$, we can find a net $\{x_{ij}^s\}_s\subseteq B_X$ such that $x_{ij}^s\rightarrow^{w^*}x_{ij}^{**}$. Now notice that all the operators $T_j^{**}$ and $T^{**}$ are $w^*-w^*$-continuous. This implies that $T(x_{ij}^s)\rightarrow T^{**}(x_{ij}^{**})$ in the weak-star topology of $Y^*$, and so
$$T(x_{ij}^s)(y_{ij})\rightarrow T^{**}(x_{ij}^{**})(y_{ij})$$
holds for every $i,j$. Similarly, $T_k(x_{ij}^s)(y_{ij})\rightarrow T_k^{**}(x_{ij}^{**})(y_{ij})$ holds for every $i,j$ and $1\leq k\leq p$. Since $z_1,z_2\in U$ and $T^{**}(z_1-z_2)>2-\varepsilon$ we can find $s$ large enough so that $$\left\vert \sum_{j=1}^{n_i}\alpha_{ij} T_k(x_{ij}^s\otimes y_{ij})-T_k(z_0)\right\vert<\alpha_0$$
holds for every $1\leq k\leq p$ and $i=1,2$ (in other words, $\sum_{j=1}^{n_i}\alpha_{ij} x_{ij}^s\otimes y_{ij}\in W$) and that
$$\sum_{i=1}^{n_1}\alpha_{1j} T(x_{1j}^s)(y_{1j})-\sum_{i=1}^{n_2}\alpha_{2j} T(x_{2j}^s)(y_{2j})>2-\varepsilon,$$
which implies that $\diam(W)\geq 2-\varepsilon$. Since $\varepsilon>0$ was arbitrary the result follows.
\end{proof}

\begin{remark}
Theorem \ref{theo:predul1general} improves \cite[Theorem 4.1]{abr2011}, where it is proved that $\mathcal C(K)\pten X$ has the D2P for every Banach space $X$ and every infinite compact Hausdorff topological space $K$. Theorem \ref{theo:predul1general} also improves \cite[Proposition 2.11]{llr}, where it is proved that $c_0\pten X$ has the D2P for every Banach space $X$.
\end{remark}

Now it is time to prove that the WODP can be also used in order to get the DD2P in a projective tensor product. This is obtained in the following theorem.

\begin{theorem}\label{theo:wodpprojective}
Let $X$ be a Banach space with the WODP. Then $X\pten Y$ has the DD2P for every Banach space $Y$.
\end{theorem}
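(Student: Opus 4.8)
The plan is to transfer the diametral property of $X\pten Y$ to the multi-point strengthening of the WODP recorded in Lemma~\ref{lemma:proj}, transported to the tensor product through operators of the form $T\otimes I_Y$, which do not increase the projective norm. First I would fix a non-empty relatively weakly open subset $W$ of $B_{X\pten Y}$, a point $z\in W\cap S_{X\pten Y}$ and $\varepsilon>0$, and shrink $W$ to a basic neighbourhood
$$\left\{w\in B_{X\pten Y}:\ \vert G_k(w)-G_k(z)\vert<\alpha,\ 1\leq k\leq p\right\}\subseteq W,$$
where $G_1,\ldots,G_p\in L(X,Y^*)=(X\pten Y)^*$ and $\alpha>0$. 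Using $B_{X\pten Y}=\overline{\co}(B_X\otimes B_Y)$ I would replace $z$, up to an arbitrarily small error $\rho>0$, by a finite convex combination $\hat z=\sum_{i=1}^n\lambda_i\, x_i\otimes y_i$ of elementary tensors with $x_i\in S_X$, $y_i\in B_Y$, $\lambda_i>0$ and $\sum_i\lambda_i=1$ (normalising each $x_i$ by moving scalars into the $Y$-factor).

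The construction then proceeds as follows. For each $i$ the functional $x\mapsto (G_k x)(y_i)$ belongs to $X^*$, so
$$W_i:=\left\{x\in B_X:\ \vert (G_k x)(y_i)-(G_k x_i)(y_i)\vert<\beta,\ 1\leq k\leq p\right\}$$
is a non-empty relatively weakly open subset of $B_X$ containing $x_i$, for a small $\beta>0$ to be fixed. I would then apply Lemma~\ref{lemma:proj} with the points $x_1,\ldots,x_n\in S_X$ to be approximately fixed, the weakly open sets $W_1,\ldots,W_n$, the targets $-x_1,\ldots,-x_n\in B_X$ and a small parameter $\varepsilon'>0$, obtaining points $z_i\in W_i$ and an operator $T\colon X\longrightarrow X$ with $\Vert T\Vert<1$, $\Vert T(x_i)-x_i\Vert<\varepsilon'$ and $\Vert T(z_i)+x_i\Vert<\varepsilon'$ for every $i$. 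The candidate far point is $z':=\sum_{i=1}^n\lambda_i\, z_i\otimes y_i$, which lies in $\co(B_X\otimes B_Y)\subseteq B_{X\pten Y}$.

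It remains to check membership and distance. Since $z_i\in W_i$, each pairing $(G_k z_i)(y_i)$ is within $\beta$ of $(G_k x_i)(y_i)$, so $\vert G_k(z')-G_k(\hat z)\vert<\beta$; choosing $\beta$ and $\rho$ small enough this yields $z'\in W$. For the distance I would use that $T\otimes I_Y$ satisfies $\Vert (T\otimes I_Y)u\Vert_\pi\leq\Vert T\Vert\,\Vert u\Vert_\pi\leq\Vert u\Vert_\pi$ directly from the definition of the projective norm. Because $T$ approximately fixes the $x_i$ one gets $\Vert (T\otimes I_Y)\hat z-\hat z\Vert<\varepsilon'$, while $\Vert T(z_i)+x_i\Vert<\varepsilon'$ gives $\Vert (T\otimes I_Y)z'+\hat z\Vert<\varepsilon'$; hence the images $(T\otimes I_Y)\hat z$ and $(T\otimes I_Y)z'$ are at distance at least $2\Vert\hat z\Vert-2\varepsilon'\geq 2(1-\rho)-2\varepsilon'$. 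Since $T\otimes I_Y$ does not increase the norm, $\Vert\hat z-z'\Vert\geq\Vert (T\otimes I_Y)(\hat z-z')\Vert$, and passing back from $\hat z$ to $z$ produces $\Vert z-z'\Vert>2-\varepsilon$ after choosing $\rho,\varepsilon',\beta$ appropriately small.

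The main obstacle is to keep $z'$ inside $W$ while forcing it to behave like $-\hat z$ under $T\otimes I_Y$; these two requirements pull in opposite directions. The resolution is the design of the sets $W_i$: by tying the neighbourhood in the $X$-factor to the scalar functionals $(G_k\,\cdot)(y_i)$ attached to the $i$-th slot, staying in $W_i$ is exactly what keeps the $G_k$-coordinates of $z'$ close to those of $\hat z$, independently of how violently $T$ sends $z_i$ towards $-x_i$. The second essential ingredient is the norm-nonincreasing trick: one never estimates $\Vert z-z'\Vert$ directly, but transfers a distance-two separation of the images back to the preimages using $\Vert T\otimes I_Y\Vert<1$; this is precisely the mechanism that already yields $\mathrm{WODP}\Rightarrow\mathrm{DD2P}$ for a single space via the case $k=1$ of Lemma~\ref{lemma:proj}.
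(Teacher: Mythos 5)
Your proposal is correct and takes essentially the same route as the paper: approximate $z$ by a convex combination $\hat z=\sum_i\lambda_i x_i\otimes y_i$ of elementary tensors, build the relatively weakly open sets $W_i\subseteq B_X$ from the scalar functionals $(G_k\,\cdot)(y_i)$ so that any choice $z_i\in W_i$ keeps $\sum_i\lambda_i z_i\otimes y_i$ inside the given weak neighbourhood, and apply Lemma~\ref{lemma:proj} with the points $x_i$ and targets $-x_i$. The only (cosmetic) difference is in the final estimate: the paper composes $T$ with a functional $G\in S_{L(X,Y^*)}$ norming $\hat z$ and evaluates $G\circ T$ on the difference, whereas you bound $\Vert (T\otimes I_Y)(\hat z-z')\Vert\geq 2\Vert\hat z\Vert-2\varepsilon'$ by the triangle inequality and pull back through the contraction $T\otimes I_Y$ — the same mechanism, slightly streamlined, and your normalisation $x_i\in S_X$ even repairs a small imprecision in the paper, whose decomposition takes $x_i\in B_X$ while Lemma~\ref{lemma:proj} is stated for points of $S_X$.
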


\begin{proof}
Let $\mathcal U\subseteq B_{X\pten Y}$ be a non-empty relatively weakly open subset of $B_{X\pten Y}$, set $z\in \mathcal U\cap S_{X\pten Y}$ and $\varepsilon>0$, and let us prove that there exists $w\in \mathcal U$ such that $\Vert z-w\Vert>2-\varepsilon$. In order to do so, we can find $n\in\mathbb N, x_1,\ldots, x_n\in B_X, y_1,\ldots, y_n\in B_Y$ and $\lambda_1,\ldots, \lambda_n\in (0,1]$ such that $\sum_{i=1}^n\lambda_i=1$, such that $z':=\sum_{i=1}^n\lambda_i x_i\otimes y_i\in\mathcal U$  and that
$$\left\Vert z-\sum_{i=1}^n \lambda_i x_i\otimes y_i \right\Vert<\frac{\varepsilon}{4}.$$
Since $\mathcal U$ is a non-empty relatively weakly open subset we can write
$$\mathcal U:=\left\{w\in B_{X\pten Y}: \left\vert  T_j(w-w_0)\right\vert<\alpha_0, 1\leq j\leq p \right\}$$
for some $T_1,\ldots, T_p\in (X\pten Y)^*=L(X,Y^*)$ with $\Vert T_j\Vert=1$ for $1\leq j\leq p$ and $\alpha_0>0$.
Since $z'\in\mathcal U$ this means that $\vert T_j(z'-w_0)\vert<\alpha_0$ holds for every $1\leq j\leq p$. Consequently, we can find $\eta>0$ small enough to guarantee that $\vert T_j(z'-w_0)\vert<\alpha_0-\eta$ holds for every $1\leq j\leq p$. By the definition of $z'$ the above means that
$$\left\vert \sum_{i=1}^n \lambda_i T_j(x_i)(y_i)-T_j(w_0)\right\vert<\alpha_0-\eta.$$
Given $1\leq i\leq n$ consider the following weakly open set
$$W_i:=\left\{z\in B_X: \left\vert T_j(z)(y_j)-T_j(x_i)(y_i)\right\vert<\frac{\eta}{n \lambda_j}\ \ \forall 1\leq j\leq p\right\},$$
which is a weakly open subset of $B_X$ containing $x_i$ for every $1\leq i\leq n$. Moreover, given any $z_i\in W_i$ we get that $\sum_{i=1}^n \lambda_i z_i\otimes y_i\in \mathcal U$. Indeed, given $1\leq i\leq n$ and $1\leq j\leq p$ we have
\[\begin{split}
\left\vert T_j\left(\sum_{i=1}^n \lambda_i z_i\otimes y_i\right)-T_j(w_0)\right\vert& 
=\left\vert \sum_{i=1}^n \lambda_i T_j(x_i)(y_i)-T_j(w_0)\right.\\
& \left. +\sum_{i=1}^n \lambda_i T_j(z_i-x_i)(y_i)\right\vert\\
& \leq \alpha_0-\eta +\sum_{i=1}^n \lambda_i \vert T_j(z_i)(y_i)-T_j(x_i)(y_i)\vert\\
& <\alpha_0-\eta+\sum_{i=1}^n \lambda_i \frac{\eta}{n\lambda_i}=\alpha_0,
\end{split}\]
as desired.

An application of Lemma~\ref{lemma:proj} implies the existence of $z_i\in W_i$ for every $1\leq i\leq n$ and a bounded operator $T:X\longrightarrow X$ such that $\Vert T\Vert\leq 1$, that $\Vert T(x_i)-x_i\Vert<\frac{\varepsilon}{8}$ and that $\Vert T(z_i)+x_i\Vert<\frac{\varepsilon}{8}$ holds for $1\leq i\leq n$.

Define $w:=\sum_{i=1}^n\lambda_i z_i\otimes y_i$, and let us prove that $w$ satisfies our purposes. On the one hand, by the above it follows that $w\in\mathcal U$, so let us estimate $\Vert z-w\Vert$. 

Since $\Vert z'\Vert>1-\frac{\varepsilon}{4}$ find $G\in S_{L(X,Y^*)}$ such that $G(z')=\sum_{i=1}^n \lambda_i G(x_i)(y_i)>1-\frac{\varepsilon}{4}$ and take $\phi:=G\circ T:X\longrightarrow X$. It is clear that $\Vert\phi\Vert\leq 1$. Now
\[
\begin{split}
\phi\left(\sum_{i=1}^n\lambda_i x_i\otimes y_i-\sum_{i=1}^n\lambda_i z_i\otimes y_i\right)& =\sum_{i=1}^n \lambda_i \phi(x_i-z_i)(y_i)\\
& =\sum_{i=1}^n \lambda_i G(T(x_i)-T(z_i))(y_i)\\
& =\sum_{i=1}^n \lambda_i G(2x_i+T(x_i)-x_i-(T(z_i)+x_i))(y_i)\\
& \geq 2\sum_{i=1}^n \lambda_i G(x_i)(y_i)-\sum_{i=1}^n\lambda_i (\Vert T(x_i)-x_i\Vert\\
& +\Vert T(z_i)+x_i\Vert)\\
& \geq 2\left(1-\frac{\varepsilon}{4}\right)-2\frac{\varepsilon}{8}=2-\frac{3\varepsilon}{4}.
\end{split}
\]
Consequently $\Vert z'-w\Vert\geq \phi(z'-w)\geq 2-\frac{3\varepsilon}{4}$, and since $\Vert z-z'\Vert<\frac{\varepsilon}{4}$ we infer that $\Vert z-w\Vert>2-\varepsilon$, as requested.
\end{proof}

Since $L_1$-preduals with the Daugavet property actually enjoy the WODP \cite[Proposition 5.9]{mr22} we get the following corollary.

\begin{corollary}
Let $X$ be an $L_1$-predual with the Daugavet property. Then $X\pten Y$ has the DD2P for every Banach space $Y$.
\end{corollary}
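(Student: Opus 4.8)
The plan is to reduce the corollary to Theorem~\ref{theo:wodpprojective} by observing that the hypothesis on $X$ already forces $X$ to have the WODP; once this identification is made, no further work is required.

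First I would invoke the fact, recorded in the discussion preceding Lemma~\ref{lemma:proj} and established in \cite[Proposition 5.9]{mr22}, that every $L_1$-predual enjoying the Daugavet property has the WODP. The hypothesis of the corollary, namely that $X$ is an $L_1$-predual with the Daugavet property, matches the hypothesis of that proposition verbatim, so this verification is immediate rather than something to be argued. This is in fact the raison d'\^etre of the WODP as an intermediate property: it was introduced precisely to capture such spaces while remaining strong enough to transfer information to tensor products.

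Having established that $X$ has the WODP, I would then apply Theorem~\ref{theo:wodpprojective} directly with the given Banach space $Y$, which is arbitrary, to conclude that $X\pten Y$ has the DD2P. Since $Y$ ranged over all Banach spaces, the conclusion holds for every $Y$, as claimed.

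There is no genuine obstacle here: the substantive content lives in Theorem~\ref{theo:wodpprojective}, whose proof rests on the operator-theoretic machinery packaged in Lemma~\ref{lemma:proj}. The only point demanding any care is confirming that the class of $L_1$-preduals with the Daugavet property is indeed among the listed examples of WODP spaces, which it is by the cited \cite[Proposition 5.9]{mr22}; everything else is a single mechanical appeal to the preceding theorem.
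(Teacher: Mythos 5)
Your proposal is correct and is exactly the paper's own argument: the paper derives this corollary by noting that $L_1$-preduals with the Daugavet property have the WODP by \cite[Proposition 5.9]{mr22} and then applying Theorem~\ref{theo:wodpprojective}. Nothing further is needed.
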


Let us conclude with a couple of remarks.

\begin{remark}\label{remark:projnomejorstrong}
Theorem~\ref{theo:wodpprojective} can not be improved to get the SD2P. Indeed, in \cite[Theorem 3.8]{llr2} it is proved that $L_\infty([0,1])\pten \ell_p^n$ fails the SD2P if $2<p<\infty$ and $n\geq 3$.
\end{remark}

\begin{remark}\label{remark:projmejoraresul}
Note that Theorem~\ref{theo:wodpprojective} improves \cite[Proposition 5.2]{lapi21} (which was in turn a strengthening of \cite[Remark 5.9]{rtv21}). In order to explain the way in which Theorem~\ref{theo:wodpprojective} improves \cite[Proposition 5.2]{lapi21}, let us introduce some notation. A Banach space $X$ is said to have the \textit{diametral local diameter two property DLD2P} if, given any slice $S$ of $B_X$, any $x\in S\cap S_X$ and any $\varepsilon>0$, there exists $y\in S$ such that $\Vert x-y\Vert>2-\varepsilon$.

In \cite[Proposition 5.2]{lapi21} it is proved that if $X$ has the WODP then $X\pten Y$ has the DLD2P for every non-zero space $Y$.  Our Theorem~\ref{theo:wodpprojective} improves the thesis to get the DD2P. Let us point out, however, that it is an open question whether the DLD2P and the DD2P are equivalent \cite[Question 4.1]{blr18}.
\end{remark}

\section*{Acknowledgements}  

The author thanks Johann Langemets for his contribution with interesting remarks and questions on the topic of the paper. The author also thanks Gin\'es L\'opez-P\'erez for fruitful conversations.

This work was supported by MCIN/AEI/10.13039/501100011033: grant PID2021-122126NB-C31, Junta de Andaluc\'ia: grant FQM-0185, by Fundaci\'on S\'eneca: ACyT Regi\'on de Murcia: grant 21955/PI/22 and by Generalitat Valenciana: grant CIGE/2022/97.


\begin{thebibliography}{999999}


\bibitem {ahlp} T.~A.~Abrahamsen, R.~Haller, V.~Lima, and K.~Pirk, \textit{Delta- and Daugavet-points in Banach spaces}, Proc. Edinbugh Math. Soc. \textbf{63} (2020), 475--496.

\bibitem {aln} T. A. Abrahansen, V. Lima and O. Nygaard, \textit{Remarks on diameter two properties}, J. Convex Anal. \textbf{20}, 2 (2013), 439--452.

\bibitem {abr2011} M. D. Acosta, J. Becerra Guerrero, A. Rodr\'iguez Palacios, {\it Weakly open sets in the unit ball of the projective tensor product of Banach spaces}, J. Math. Anal. Appl. {\bf 383}, 2 (2011), 461--473.

\bibitem {belo06} J. Becerra Guerrero and G. L\'opez-P\'erez, \textit{Relatively weakly open subsets of the unit ball in functions spaces}, J. Math. Anal. Appl. \textbf{315} (2006), 544--554.

\bibitem{blr18} J.\ Becerra Guerrero, G.\ L\'opez-P\'erez, and A.\ Rueda Zoca, \textit{Diametral diameter two properties in Banach spaces}, J. Conv. Anal. \textbf{25}, 3 (2018), 817--840.

\bibitem {blr2015ten} J.~Becerra Guerrero, G.~L\'opez-P\'erez and A.~Rueda Zoca, \textit{Octahedral norms in spaces of operators}, J. Math. Anal. Appl. \textbf{427}, 1 (2015), 171--184.

\bibitem {dau} I.K. Daugavet, \textit{On a property of completely continuous operators in the space C}, Uspekhi Mat.
Nauk \textbf{18} (1963), 157-158 (Russian).

\bibitem {hlp17}  R. Haller, J. Langemets, and M. P\~oldvere, \textit{Rough norms in spaces of operators}, Math. Nach. \textbf{290} (2017), 1678--1688.

\bibitem {kkw03} V. Kadets, N. J. Kalton, D. Werner, \textit{Remarks on rich subspaces of Banach spaces}, Stud. Math. \textbf{159}, 2 (2003), 195--206.

\bibitem {kssw} V. Kadets, R. Shvidkoy, G. Sirotkin and D. Werner, \textit{Banach spaces with the Daugavet property}, Trans. Am. Math. Soc. \textbf{352}, No.2 (2000), 855--873.

\bibitem {lange2020} J. Langemets, \textit{Symmetric strong diameter two property in tensor products of Banach spaces}, J. Math. Anal. Appl. \textbf{491}, 1 (2020), article 124314.


\bibitem{llr} J. Langemets, V. Lima and A. Rueda Zoca, \textit{Almost square and octahedral norms in tensor products of Banach spaces}, RACSAM \textbf{111} (2017), 841-853.

\bibitem {llr2} J.~Langemets, V.~Lima and A.~Rueda Zoca, \textit{Octahedral norms in tensor products of Banach spaces}, Q. J. Math. \textbf{68}, 4 (2017), 1247--1260.

\bibitem{lapi21} J. Langemets and K. Pirk, \textit{Stability of diametral diameter two properties}, RACSAM \textbf{115} (2021), article 96.

 \bibitem{linds} J. Lindenstrauss, \textit{Extension of compact operators}, Mem. Amer. Math. Soc. \textbf{48} (1964).

\bibitem{mr22} M. Mart\'in and A. Rueda Zoca, \textit{Daugavet property in projective symmetric tensor products of Banach spaces}, Banach J. Math. Anal. \textbf{16} (2022), article 35.

\bibitem{ojapol07}
E.~Oja and M.~P{\~o}ldvere, \emph{Principle of local reflexivity revisited},
  Proc. Amer. Math. Soc. \textbf{135} (2007), no.~4, 1081--1088.
  
\bibitem {rueda24} A. Rueda Zoca, \textit{Stability results of octahedrality in tensor product spaces}, preprint. Available at ArXiV.org with reference \href{https://arxiv.org/abs/1708.07300}{arXiv:1708.07300}

\bibitem {rtv21} A. Rueda Zoca, P. Tradacete and I. Villanueva, \textit{Daugavet property in tensor product spaces}, J. Inst. Math. Jussieu \textbf{20}, 4 (2021), 1409--1428.

  
\bibitem {ryan} R.~A.~Ryan, \textit{Introduction to tensor products of Banach spaces}, Springer Monographs
in Mathematics, Springer-Verlag, London, 2002.

\bibitem {shv} R. V. Shvidkoy, \textit{Geometric aspects of the Daugavet property}, J. Funct. Anal. \textbf{176} (2000),
198-212.

\bibitem {werner01} D. Werner, \textit{Recent progress on the Daugavet property}, Ir. Math. Soc. Bull. \textbf{46} (2001), 77--79.


\end{thebibliography}
\end{document}